\numberwithin{equation}{section}
\theoremstyle{plain}
\newtheorem{thm}[equation]{Theorem}
\newtheorem{cor}[equation]{Corollary}
\newtheorem{prop}[equation]{Proposition}
\newtheorem{lem}[equation]{Lemma}
\newtheorem{hyp}[equation]{Hypothesis}
\newtheorem{conj}[equation]{Conjecture}
\theoremstyle{definition}
\newtheorem{defn}[equation]{Definition}
\newtheorem{rem}[equation]{Remark}
\DeclareMathOperator{\Hom}{Hom}
\DeclareMathOperator{\End}{End}
\DeclareMathOperator{\Irr}{Irr}
\DeclareMathOperator{\Stab}{Stab}
\DeclareMathOperator{\Lie}{Lie}
\DeclareMathOperator{\Gal}{Gal}
\newcommand{\mult}{^\times}
\newcommand{\bdd}{_{\mathrm{b}}}
\newcommand{\spl}{_{\mathrm{s}}}
\newcommand{\an}{_{\mathrm{an}}}
\newcommand{\nonram}{_{\mathrm{nr}}}
\newcommand{\tame}{^{\mathrm{t}}}
\newcommand{\der}{{\mathrm{der}}}
\newcommand{\HH}{{\mathcal{H}}}
\newcommand{\BBB}{{\mathcal{B}}}
\newcommand{\BB}{{\mathfrak{B}}}
\newcommand{\ZZ}{{\mathfrak{Z}}}
\newcommand{\fff}{{\mathfrak{f}}}
\newcommand{\sss}{{\mathfrak{s}}}
\newcommand{\ttt}{{\mathfrak{t}}}
\newcommand{\C}{{\mathbb{C}}}
\newcommand{\R}{{\mathbb{R}}}
\newcommand{\Q}{{\mathbb{Q}}}
\newcommand{\Rep}{{\mathcal{R}}}
\newcommand{\zGF}{{{}^\circ G(F)}}
\newcommand{\lsup}{{}^}
\newcommand{\doubleslash}{/\!\!/}
\begin{document}

\title{Regular Bernstein Blocks}
\subjclass{22E50}
\author{Jeffrey D. Adler}
\address{
Department of Mathematics and Statistics \\
American University \\
4400 Massachusetts Ave NW \\
Washington, DC  20016-8050 \\
USA
}
\email{jadler@american.edu}

\author{Manish Mishra}
\thanks{The second named author was partially supported by SERB MATRICS 
and SERB ERCA grants}
\address{
Department of Mathematics \\
Indian Institute for Science Education and Research \\
Dr.\ Homi Bhabha Road, Pashan \\
Pune 411 008 \\
India 
}
\email{manish@iiserpune.ac.in}

\begin{abstract}
For a connected
reductive group $G$ defined over a non-archimedean local field $F$,
we consider the Bernstein blocks in the category of smooth representations
of $G(F)$.
Bernstein blocks whose cuspidal support involves a regular supercuspidal
representation are called \emph{regular} Bernstein blocks.
Most Bernstein
blocks are regular when the residual characteristic of $F$
is not too small.
Under mild hypotheses on the residual characteristic,
we show that the Bernstein 
center of a regular Bernstein block of $G(F)$
is isomorphic to the Bernstein center of a regular depth-zero Bernstein block
of $G^{0}(F)$, where $G^{0}$ is a certain twisted Levi subgroup of
$G$.
In some cases, we show that the blocks themselves are equivalent,
and as a consequence we prove the ABPS Conjecture in some new cases.
\end{abstract}

\maketitle

\section{Introduction}

\subsection{The problem}
Let $F$ be a nonarchimedean local field of residual characteristic $p$,
and $G$ a connected reductive $F$-group.
One wants to understand the category $\Rep(G(F))$
of smooth, complex representations of $G(F)$.
The Bernstein decomposition
\cite{bernstein:center}
allows us to decompose this category into a product
$\prod_{\{[M,\sigma]_G\}}{\Rep^{[M,\sigma]_G}(G(F))}$
of full subcategories
$\Rep^{[M,\sigma]_G}(G(F))$,
called \emph{Bernstein blocks},
so it is in some sense enough to understand each block.
(Here,
$[M,\sigma]_G$ ranges over the set of \emph{inertial classes}
of $G(F)$.
Such a class is represented by an $F$-Levi subgroup $M$ of $G$ and
an irreducible, supercuspidal representation $\sigma$ of $M(F)$,
and consists of the set of all pairs $(M'(F),\sigma')$ that are equivalent
to $(M(F),\sigma)$ under twisting by unramified characters of $M(F)$
and conjugation by elements of $G(F)$.)
Each block $\Rep^{[M,\sigma]_G}(G(F))$ is a module category over
an algebra,
and one can construct an appropriate Hecke algebra
in a more-or-less explicit
way if $[M,\sigma]_G$ has an associated Bushnell-Kutzko \emph{type}
\cite{bushnell98}.
In particular, if $\sigma$ has depth zero,
then the structure of the Hecke algebra is given by Morris \cite{Morris93}.
Thus, in principle, one way to understand the category $\Rep(G(F))$
would be to show that for each block $\Rep^{[M,\sigma]_G}(G(F))$,
there is another connected reductive $F$-group $G^0$
and a depth-zero block $[M^0,\sigma^0]_{G^0}$ for $G^0(F)$
such that the blocks are equivalent as categories.

Our aim in this paper is to study the structure of Bernstein blocks using the above 
approach in many cases.

Let us remark that Kim and Yu \cite{KY} give a construction
of types (see \cite{KY}*{\S 7.4} for more details),
and Fintzen \cite{fintzen:types-tame}
has recently shown that if $G$ splits over a tame
extension of $F$ and $p$ does not divide the order of the Weyl
group of $G$, then every Bernstein block for $G(F)$ admits
a type arising from this construction.
The Kim-Yu construction starts with a certain datum $\Sigma$, out of which it 
produces a sequence of types $(K^{i},\rho_{i})$ for groups $G^{i}(F)$,
$0\leq i\leq d$, where $\vec{G}=(G^{0}\subsetneq\ldots\subsetneq G^{d}=G)$
is a tower of twisted Levi subgroups of $G$ and which is part of the datum $\Sigma$. Write $(K,\rho)$ for
$(K^{d},\rho_{d})$ and write $\HH(G,\rho^{\vee})$
(resp.\ $\HH(G^{0},\rho_{0}^\vee)$)
for the Hecke algebra associated to the type $(K,\rho)$
(resp.\ $(K^{0},\rho_{0})$). Denote by $\mathcal{R}_\mathrm{f}(G,\rho)$ (resp. $\mathcal{R}_\mathrm{f}(G^0,\rho_0)$),
the category of finite-length modules over
$\HH(G,\rho^{\vee})$ (resp.\ $\HH(G^{0},\rho_{0}^\vee)$).
Existing results
\cites{howe-moy:harish-chandra,moy:u21,moy:gsp4,Kim2001,Heiermann2017},
together with a conjecture of Yu \cite{Yu00}*{Conjecture 0.2}) on supercuspidal
blocks, suggest a more general conjecture:

\begin{conj}\label{conj-Hecke}
There exists a bijection, determined by the choice of $\Sigma$, between the simple modules of 
$\HH(G,\rho^{\vee})$ and those of $\HH(G^{0},\rho_{0}^\vee)$.
\end{conj}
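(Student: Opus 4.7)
The plan is to prove the bijection by constructing an algebra isomorphism $\HH(G,\rho^\vee)\cong\HH(G^0,\rho_0^\vee)$, from which the assertion about simple modules follows immediately. The Kim-Yu datum $\Sigma$ produces $(K,\rho)$ by inflating $(K^0,\rho_0)$ up the twisted Levi tower $\vec{G}=(G^0\subsetneq\cdots\subsetneq G^d=G)$ using the generic characters $\phi_i$ of $G^i$, so there is a canonical structural link between the two types that one can hope to upgrade to a Hecke-algebraic one. The natural route is to go through the presentations of the two algebras as extended affine Hecke algebras and match the data.

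First I would use the Bushnell-Kutzko-Morris machinery to write each Hecke algebra as such an extended affine Hecke algebra, encoded by (i) an affine root system and affine Weyl group coming from the support of the algebra, (ii) parameters $q_s$ attached to simple affine reflections, and (iii) a finite abelian $R$-group acting on the affine piece together with a two-cocycle. I would then analyze the support of $\HH(G,\rho^\vee)$: the twisted Levi factors $G^i$ for $i>0$ should contribute no new affine roots once one has twisted by $\phi_i$, because the genericity of the $\phi_i$ forces the relevant intertwining operators at new walls to vanish. This reduces the support of $\HH(G,\rho^\vee)$ to the image of the support of $\HH(G^0,\rho_0^\vee)$ under the inclusion $G^0(F)\hookrightarrow G(F)$, yielding identical affine root data on the two sides.

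The main obstacle will be matching the parameters and the cocycle. Each parameter $q_s$ is computed from the dimension of an intertwining space, which on the positive-depth side involves character values of $\rho$ on subgroups whose reductive quotients are controlled by the Moy-Prasad filtration. One must show that these dimensions agree with the analogous ones for $(K^0,\rho_0)$ on $G^0(F)$, i.e.\ that the extra layers of $\rho$ above $\rho_0$ leave them unchanged. Under the regularity hypothesis this reduces to a Deligne-Lusztig character computation inside the reductive quotients; in parallel one must verify that the two-cocycles on the $R$-group coincide, which is a more delicate cohomological comparison than the parameter match. Carrying out both comparisons uniformly, and keeping track of the mild hypotheses on $p$ that make the Kim-Yu construction and these character computations well-behaved, is where I expect the technical work to concentrate.
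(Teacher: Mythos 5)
The statement you are addressing is labelled a \emph{conjecture} in the paper, and it is not proved there in general; your proposal does not supply a proof either. More seriously, your central strategy --- deducing the bijection on simple modules from an algebra isomorphism $\HH(G,\rho^\vee)\cong\HH(G^0,\rho_0^\vee)$ obtained by matching extended affine Hecke presentations --- is known to fail in general. The paper flags this explicitly in Remark \ref{rem-Hecke}: the isomorphism of Equation (\ref{eq-Hecke}) does not always hold (the authors thank Aubert in the acknowledgments for pointing out a counterexample among tame Bernstein blocks), and one expects at best a weaker category equivalence (\ref{eq-R_f}). So any argument that proceeds by identifying the affine root datum, the parameters $q_s$, and the $R$-group cocycle on both sides must break somewhere, and your sketch does not identify where the matching goes through and where it does not; the parameter and cocycle comparison, which you rightly isolate as the crux, is left entirely open.

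Where the paper does establish the conjecture, it uses different and more economical arguments than the presentation-matching you envision. In the supercuspidal case $M=G$ (Corollary \ref{cor:yu-true}), the point is that under the stated hypotheses both Hecke algebras are \emph{commutative} (via Theorem \ref{thm:roche} combined with multiplicity-free restriction, e.g.\ Theorem \ref{thm:v-regular-MF} or Proposition \ref{prop:MF-0G}); since their centers are isomorphic by \cite{mishra19}*{Cor.\ 6.3}, the algebras themselves are, with no affine root data involved. When the twisted Levi tower $\vec{G}$ consists of genuine $F$-Levi subgroups (Theorem \ref{T-types}, Corollary \ref{P-types}), the isomorphism comes from an inductive support argument: the Kim--Yu type is decomposed with respect to a parabolic, \cite{KY}*{Theorem 8.1} bounds the support of $\HH(G,\rho^\vee)$ inside $K\,G^{d-1}(F)\,K$, and \cite{bushnell98}*{Theorem 7.2(ii)} then yields $\HH(G,\rho^\vee)\cong\HH(G^{d-1},\rho_{d-1}^\vee)$. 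Your route is closer in spirit to Solleveld's program mentioned at the end of the introduction, but that work imposes nontrivial hypotheses on the existence and compatibility of cocycles precisely because the naive matching does not come for free.
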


While the Hecke algebra $\HH(G^0,\rho_0^\vee)$ is determined
up to isomorphism,
it (and thus the bijection of the Conjecture) depends on $\Sigma$,
which is not uniquely determined.
\begin{rem} \label{rem-Hecke}
In many cases, something stronger than Conjecture \ref{conj-Hecke} holds.
Namely, the datum $\Sigma$ determines an isomorphism
of Hecke algebras 
\begin{equation}\label{eq-Hecke}
\HH(G,\rho^{\vee}) \overset{\sim}{\longrightarrow} \HH(G^{0},\rho_{0}^\vee).
\end{equation}
This is however not true in general.
In many cases where the isomorphism of Hecke algebras fails,
it may still be true that there exists an equivalence of categories
\begin{equation}\label{eq-R_f}
\mathcal{R}_\mathrm{f}(G,\rho) \cong \mathcal{R}_\mathrm{f}(G^0,\rho_0),
\end{equation}
determined by the choice of $\Sigma$.
\end{rem}

\subsection{Bernstein center}

In \cite{kaletha:regular-sc}, Kaletha studies a large class of superpercuspidal representations
which he calls \emph{regular}.
Most supercuspidal representations are of this kind when
$p$ is not too small
(see the paragraph following \cite{kaletha:regular-sc}*{Definition 3.7.3} 
to understand what is meant by ``most'').
We call a Bernstein block $\Rep^{[M,\sigma]_G}(G(F))$
\emph{regular} if $\sigma$ is a regular supercuspidal representation.
Under certain restrictions on $p$,
we show in Theorem \ref{thm:b-center-isom} that 
\begin{thm}\label{mm-thm}
The Bernstein center of a regular Bernstein block $\Rep^{[M,\sigma]_G}(G(F))$ is isomorphic to
the Bernstein center of a depth-zero regular Bernstein block \\ ${\Rep^{[M^{0},\sigma_{0}]_{G^0}}(G^0(F))}$ of a
twisted Levi subgroup $G^0$ of $G$. 
\end{thm}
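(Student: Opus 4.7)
The plan is to exploit Bernstein's explicit description of the center of a block. For a block $[M,\sigma]_G$, the Bernstein center identifies with the algebra of regular functions on
\[
\Omega_{[M,\sigma]_G} = \bigl(X\nonram(M)/X\nonram(M,\sigma)\bigr)\big/W_{[M,\sigma]_G},
\]
where $X\nonram(M)$ is the complex torus of unramified characters of $M(F)$, $X\nonram(M,\sigma)$ its finite subgroup stabilizing $\sigma$ up to isomorphism, and $W_{[M,\sigma]_G} = N_G(M,[\sigma])/M$ is the finite group of $G$-rational conjugations permuting the inertial orbit of $\sigma$. It therefore suffices to produce an isomorphism of complex tori
\[
X\nonram(M)/X\nonram(M,\sigma) \;\xrightarrow{\sim}\; X\nonram(M^0)/X\nonram(M^0,\sigma_0)
\]
that is equivariant for an identification $W_{[M,\sigma]_G} \xrightarrow{\sim} W_{[M^0,\sigma_0]_{G^0}}$.

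To pin down $G^0$ and $(M^0,\sigma_0)$, I would invoke Kaletha's bijection between regular supercuspidals and tame regular elliptic pairs. Applied inside $M$, the pair attached to $\sigma$ produces a tame twisted Levi $M^0 \subseteq M$ (the connected centralizer of a tame elliptic maximal torus of $M$), a depth-zero regular supercuspidal $\sigma_0$ of $M^0(F)$, and a Yu datum carrying a twisted Levi tower $M^0 \subsetneq \cdots \subsetneq M$. Extending this tower by taking centralizers of the same elliptic torus inside $G$ produces the twisted Levi $G^0 \subseteq G$ containing $M^0$ as an $F$-rational Levi subgroup, in such a way that the relative position of $M^0$ in $G^0$ mirrors that of $M^0$ in $M$.

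With these identifications fixed, the first task is to show that the natural restriction $X\nonram(M) \to X\nonram(M^0)$ descends to an isomorphism on the displayed quotients. Since the quotient on each side should be computed by the unramified characters of the common connected center entering the Kaletha data, the verification reduces to checking that the ``gap'' between $X\nonram(M^0)$ and the image of $X\nonram(M)$ is exactly accounted for by twists that can be absorbed into the higher Howe factors of the Yu datum, so that they lie in $X\nonram(M,\sigma)$; this is essentially a bookkeeping exercise with the Kim--Yu construction. The second task is to match the two Weyl groups: one must show that any $g \in N_G(M,[\sigma])$ can, after multiplication by an element of $M$, be arranged to lie in $N_{G^0}(M^0,[\sigma_0])$, and conversely, and that the induced actions on the two tori agree under the isomorphism above.

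The main obstacle I expect is the Weyl-group identification. An element of $N_G(M)$ does not a priori preserve $M^0$ or the Howe factors defining $\sigma_0$, so to transfer it to $N_{G^0}(M^0)$ one needs the rigidification of regular supercuspidals afforded by Kaletha, namely the essential uniqueness of the underlying tame elliptic maximal torus up to stable conjugacy together with the regularity of the associated character, combined with the intertwining analysis of \cite{KY}. These inputs should force $g$ (after modification by $M$) to normalize both $M^0$ and the depth-zero datum, and should also ensure that the resulting action on $[\sigma_0]_{G^0}$ is compatible with the action on $[\sigma]_G$. It is precisely at this comparison step that the regularity hypothesis on $\sigma$ and the mild restrictions on the residual characteristic $p$ are expected to be essential, since without them one cannot rule out exotic automorphisms of the underlying torus that would distinguish the two Weyl groups.
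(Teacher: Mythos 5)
Your proposal follows exactly the strategy of the paper: identify the Bernstein center with regular functions on $\Irr^\ttt(L(F))/W^\sss$, produce the isomorphism of tori by restriction of unramified characters (the paper's Theorem~\ref{mm19}, imported from \cite{mishra19}*{Theorem~6.1}), and match the finite Weyl groups using the rigidity of the Kaletha--Howe factorization (Theorem~\ref{thm:weyl-isom}, resting on Lemma~\ref{lem:stab}). The mechanism you leave implicit --- that after modification by $L(F)$ an element of $N^\sss$ stabilizes $\theta|_{S(F)\bdd}$ and so lies in $N_{G^0}(S)(F)$, with the $F$-anisotropy of $Z(L^0)/Z(L)$ supplying the converse inclusion --- is precisely what the paper's proof of Theorem~\ref{thm:weyl-isom} makes explicit.
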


When $M=G$, this result is covered by \cite{mishra19}*{Theorem 6.1} which proves such an 
isomorphism for tame supercuspidal blocks.
Thus, Theorem \ref{mm-thm} is a generalization
for regular Bernstein blocks 
of the main result of \cite{mishra19}.

\subsection{Hecke algebra isomorphism}

Suppose $M=G$. Then Conjecture \ref{conj-Hecke} would follow from
\cite{Yu00}*{Conjecture 0.2},
which appears here as Conjecture \ref{conj:yu}.
We show (Corollary \ref{cor:yu-true}) that the conjecture is true
if the $F$-split rank of the center of $G$ is at most $1$;
or if $\sigma$ is generic;
or if $p$ satisfies some mild hypotheses,
and the restriction of $\sigma$ to the derived group of $G$
decomposes into regular supercuspidal representations.

Now let us consider 
the case where $M$ is not necessarily equal to $G$.
Let $(K,\rho)$ be the $[M,\sigma]_G$-type obtained out of
the datum $\Sigma$ by the Kim-Yu construction.
Assume that the tower $\vec{G}$
of twisted Levi subgroups contained in the datum $\Sigma$ 
consists of $F$-Levi subgroups.
Then we show
(in Corollary \ref{P-types})
that Equation (\ref{eq-Hecke}) and consequently Conjecture \ref{conj-Hecke} holds. 
This follows as an easy consequence of the results in \cite{KY}.
The condition that the tower $\vec{G}$ consists of $F$-Levi subgroups 
holds for instance when the connected center of $G^0$ is split modulo the center of $G$. 
In particular, it holds when $G$ is split and $M$ is a maximal split torus
in $G$. Thus, as a very special case, one obtains Equation (\ref{eq-Hecke})
(and consequently Conjecture \ref{conj-Hecke}) for principal series blocks 
of split groups, recovering the result of Roche \cite{roche:2000}. We also prove 
Equation (\ref{eq-Hecke}) for certain principal series blocks of non-split groups (Proposition \ref{prop:prin-series}) for suitably large $p$. 

We prove weaker versions of Equation (\ref{eq-Hecke})
for more general situations in \S\ref{sec:partial}.

\subsection{Consequences for the ABPS Conjecture}
Aubert, Baum, Plymen, and Solleveld \cite{abps17} conjecture
that the set of irreducible objects in a Bernstein block
has the structure of a particular \emph{twisted extended quotient}.
This ``ABPS Conjecture''
holds for depth-zero Bernstein blocks due to the work of Solleveld \cite{solleveld2012}.
We show, using this result of Solleveld, that under certain restrictions
on $p$,
Conjecture \ref{conj-Hecke} for a regular Bernstein block implies
ABPS Conjecture for that block.
Consequently, we obtain the ABPS Conjecture
for many new cases (Theorem \ref{thm:ABPS-Levi}),
namely those mentioned in the previous section.
In particular, as the simplest case,
we obtain ABPS Conjecture for principal series blocks of split groups.
This reproves a part of the main result in \cite{abps17:p-series} in a much shorter way. 
However, unlike Theorem \ref{thm:ABPS-Levi}, the proof in loc.~cit. is independent 
of the main result of \cite{solleveld2012}. 

Solleveld has, in a recent preprint, given a complete proof of the ABPS Conjecture \cite{Sol2020}*{Theorem D}. His proof bypasses the type-theoretic approach and builds instead on the ideas of Bernstein \cite{bernstein:center} and Heiermann \cite{Hei2011}. Moreover, under certain technical hypotheses (the existence and compatibility of certain co-cycles), he also shows \cite{Sol2020}*{Theorem B} that the category $\mathcal{R}_\mathrm{f}(G,\rho)$ is equivalent to the finite-length module category of a twisted affine Hecke algebra. Under the same hypothesis and some additional residue characteristic hypotheses (Hypothesis \ref{hyp:p}), his results may be used to establish Equation (\ref{eq-R_f}) for regular Bernstein blocks provided one shows that the isomorphisms of Theorems \ref{mm19} and \ref{thm:weyl-isom} respect a certain finer structure dictated by Harish-Chandra $\mu$-function. We plan to pursue these matters in a subsequent work.


\subsection{Acknowledgments}
It is a pleasure for the authors to thank the following.
Maarten Solleveld clarified for us
the definition of the twisted extended quotient and pointed out an error
in a previous draft.
Anne-Marie Aubert pointed out that the isomorphism of Hecke algebras in Remark \ref{rem-Hecke} is false in general for tame Bernstein blocks.
Ad\'ele Bourgeois pointed out a typographical error in Definition \ref{def:reg}. 
An anonymous referee provided comments that helped us to improve the exposition
of this work.

\section{Notation}

Throughout this article, $F$ denotes a non-archimedean local field
of residue characteristic $p$.
If $G$ is a reductive $F$-group and $K$ is a subgroup of $G(F)$,
we will denote $gKg^{-1}$ by $\lsup gK$ for $g\in G(F)$. If $\rho$
is a representation of $K$, then $\lsup g\rho$ will denote the representation
$x\mapsto\rho(g^{-1}xg)$ of $\lsup gK$. If $H$ is a subgroup
of a group $G$, then $N_G(H)$ denotes
the normalizer of $H$ in $G$. 

Write $\BBB(G,F)$ for the Bruhat-Tits building of $G(F)$. 
For any point $x\in\BBB(G,F)$,
we write $G(F)_{x,0}$ for the parahoric subgroup of $G(F)$ associated to $x$,
and we let $G(F)_{x:0:0+}$ denote the quotient of $G(F)_{x,0}$ by
its maximal pro-$p$ subgroup.
If $G$ is a torus, then we omit $x$ from the notation
and just write $G(F)_0$ for the unique parahoric subgroup of $G(F)$,
and
we also let $G(F)\bdd$ denote
the unique maximal bounded subgroup.

For an $F$-group $A$ whose connected part is a torus,
let $A\spl$ and $A\an$ denote the maximal $F$-split
and $F$-anisotropic subtori in $A$, respectively.

\section{Review of the Bernstein center}

Throughout, $G$ denotes a connected reductive $F$-group.
We quickly review general theory and fix some notation. 

\subsection{Bernstein decomposition}

Let $\zGF$ denote the subgroup
\[
\{g\in G(F)\colon|\nu(g)|=1\:\text{for every character \ensuremath{\nu\in\mathrm{Hom}}}_{F}(G,\mathbb{G}_{\mathrm{m}})\},
\]
and let $X\nonram(G(F))=\Hom(G(F)/\zGF,\C\mult)$.
The group $X\nonram(G(F))$ is called the group of
\emph{unramified characters} of $G(F)$. Consider the pairs $(L(F),\sigma)$
where $L$ is an $F$-Levi subgroup of $G$
and $\sigma$ is an irreducible supercuspidal representation of $L(F)$.
Denote by $[L,\sigma]_{G}$ its
\emph{inertial equivalence class}:
the set of pairs $(L',\sigma')$,
such that
$L'$ is an $F$-Levi subgroup of $G$, $\sigma'$ is an irreducible,
supercuspidal representation of $L(F)$,
and $(L',\sigma')=(\lsup g L, \lsup g \sigma\otimes \nu)$
for some $g\in G(F)$ and $\nu\in X\nonram(L(F))$.
The set of inertial equivalence
classes is called the \emph{Bernstein spectrum} of $G(F)$,
which we denote by $\BB(G,F)$.
Let $\Rep(G(F))$ denote the
category of smooth representations of $G(F)$.
We say that a smooth
irreducible representation $\pi$ has \emph{inertial support}
$\sss=[L,\sigma]_{G}$
if $\pi$ appears as a subquotient of a representation parabolically
induced from some element of the class $\sss$. Define a full
subcategory $\Rep^\sss(G(F))$ of $\Rep(G(F))$
as follows: $\pi\in\Rep(G(F))$ belongs to $\Rep^\sss(G(F))$
if each irreducible subquotient of $\pi$ has inertial support $\sss$.
We will denote the class of irreducible objects in $\Rep^\sss(G(F))$
by $\Irr^\sss(G(F))$. 
\begin{thm}[Bernstein]
We have a decomposition
\[
\Rep(G(F))=\prod_{\sss\in\BB(G,F)}\Rep^\sss(G(F)).
\]
\end{thm}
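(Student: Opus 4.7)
The plan is to split the proof into three steps: (i) attach a well-defined inertial support $\sss\in\BB(G,F)$ to each $\pi\in\Irr(G(F))$; (ii) prove orthogonality, i.e.\ $\Hom_{G(F)}(\pi_1,\pi_2)=0$ whenever $\pi_i\in\Rep^{\sss_i}(G(F))$ with $\sss_1\neq\sss_2$; and (iii) decompose an arbitrary smooth representation $\pi$ as $\pi=\prod_\sss \pi_\sss$ with $\pi_\sss\in\Rep^\sss(G(F))$.

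For step (i), I would invoke the cuspidal support theorem: every irreducible smooth representation of $G(F)$ appears as a subquotient of $\mathrm{Ind}_P^G(\sigma)$ for some supercuspidal pair $(L,\sigma)$, with $(L,\sigma)$ unique up to $G(F)$-conjugacy. Existence is established by Jacquet-module arguments (choose $P=LU$ minimal among parabolics with $r_P^G(\pi)\neq 0$ having a supercuspidal quotient, then apply Frobenius reciprocity); uniqueness follows from the Bernstein--Zelevinsky geometric lemma describing Jacquet modules of parabolic inductions via double cosets. Passing to the further quotient by the twisting action of $X\nonram(L(F))$ and by $G(F)$-conjugacy promotes cuspidal support to inertial support.

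For step (ii), because $\Rep^\sss(G(F))$ is closed under subobjects, quotients, and extensions, a standard $\Hom$ long-exact-sequence argument reduces the vanishing to pairs of irreducibles; there it is immediate from the uniqueness of cuspidal support established in step (i).

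Step (iii) is the genuine obstacle. Following Bernstein's original strategy, for each $\sss=[L,\sigma]_G$ one constructs a progenerator of $\Rep^\sss(G(F))$, built by parabolically inducing a compactly-induced representation of $L(F)$ that carries $\sigma$ in its cuspidal support, and equivalently a central idempotent $e_\sss$ in the Bernstein center whose action on every smooth $\pi$ extracts the component $\pi_\sss$. The core technical input is a local-finiteness statement: for every compact-open $K\subset G(F)$, only finitely many $\sss$ can contribute nonzero vectors to $\pi^K$. I would establish this by endowing $\BB(G,F)$ with its natural scheme structure — each connected component being roughly the quotient of the complex torus $X\nonram(L(F))$ by a finite subgroup acting via twisting — and then showing, via second adjointness and an analysis of the finite-dimensional algebras $\HH(G(F))\doubleslash K$, that the Bernstein center acts on $\pi^K$ through a Noetherian quotient supported on finitely many components. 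This scheme-theoretic finiteness, together with the interplay between parabolic induction and the torus $X\nonram(L(F))$, is the step I expect to demand the most care.
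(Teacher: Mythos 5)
The paper does not prove this theorem; it records it as a known result of Bernstein and cites \cite{bernstein:center}, so there is no in-paper argument to compare against. Your three-step outline is the standard route, and steps (i) and (ii) are correct as sketched.

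Step (iii) is where essentially all the work lives, and as written it has a circularity. You propose to obtain the finiteness statement (only finitely many $\sss$ can contribute to $\pi^K$) by arguing that the Bernstein center acts on $\pi^K$ through a Noetherian quotient ``supported on finitely many components'' of $\BB(G,F)$. But identifying the spectrum of the Bernstein center with $\BB(G,F)$, so that ``support on finitely many components'' has the meaning you want, is itself a consequence of the decomposition theorem you are trying to prove; at this point in the argument the center is only $\End(\mathrm{Id}_{\Rep(G(F))})$, an abstract commutative ring. Bernstein's treatment avoids this by proving finiteness directly and without appeal to the center: for a fixed compact open $K$ there are finitely many conjugacy classes of Levi subgroups $L$; for each $L$ the irreducible supercuspidal representations of $L(F)$ with a nonzero $\bigl(K\cap L(F)\bigr)$-fixed vector fall into finitely many $X\nonram(L(F))$-orbits (this uses that supercuspidal matrix coefficients are compactly supported modulo the center of $L(F)$, or equivalently depth/type bounds); exactness of the Jacquet functor then bounds the set of relevant $\sss$. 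With finiteness in hand, one builds idempotents directly in the algebra of $K$-biinvariant compactly supported functions, block by block, checks compatibility as $K$ shrinks, and only afterward identifies the Bernstein center as the ring of regular functions on $\BB(G,F)$. If you intend to invoke second adjointness in this step, note that its proof in Bernstein's notes is itself intertwined with this finiteness and needs to be ordered carefully to avoid begging the question.
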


\subsection{Hecke algebra}

Let $(\tau,W)$ be an irreducible representation of a compact open
subgroup $J$ of $G(F)$. The Hecke algebra $\HH(G,\tau)$
is the space of compactly supported functions
$f:G(F)\longrightarrow\End_\C(\tau)$
satisfying 
\[
f(j_{1}gj_{2})=\tau(j_{1})f(g)\tau(j_{2}).
\]
The standard convolution algebra gives $\HH(G,\tau)$ the
structure of an associative $\C$-algebra with identity. 

We say that $(J,\tau)$ is an \emph{$\sss$-type} if $\Rep^\sss(G(F))$
is precisely the subcategory of $\Rep(G(F))$ consisting of smooth
representations which are generated by their $\tau$-isotypic component.
In that case, $\Rep^\sss(G(F))$ is equivalent to the
category of non-degenerate modules of $\HH(G,\tau^\vee)$, 
where $\tau^\vee$ is the dual of $\tau$ (\cite{bushnell98}).

\subsection{Bernstein center}

Let $\sss=[L,\sigma]_{G}\in\BB(G,F)$ and set
$\ttt=[L,\sigma]_{L}\in\BB(L,F)$.
The center
$\mathfrak{Z}(G(F))$ (resp.\ $\mathfrak{Z}^{\sss}(G(F))$, resp.\ $\mathfrak{Z}^{\ttt}(L(F))$)
of the category $\Rep(G(F))$ (resp.\ $\Rep^{\sss}(G(F))$,
resp.\ $\Rep^{\ttt}(L(F))$) is called its
\emph{Bernstein center}.
Recall here that the \emph{center}
of an abelian category is the endomorphism ring of the identity functor. 
Let
\[
N^\sss
=
\{
n\in N_{G}(L)(F)
\mid
\lsup n \sigma\cong\sigma\nu \:\text{for some $\nu\in X\nonram(L(F))$}
\},
\]
and write $W^\sss=N^\sss/L(F)$. The set $\Irr^\ttt(L(F))$ of irreducible objects of $\Rep^{\ttt}(L(F))$
has the structure of a complex affine variety, on which the group $W^\sss$
acts algebraically.
(The variety $\Irr^\ttt(L(F))$
is (non-canonically) isomorphic to the quotient
$X\nonram(L(F))/ \{\chi\in X\nonram(L(F))\mid\sigma\chi=\sigma\}$.)

\begin{thm}[Bernstein]\cite{bernstein:center}
The Bernstein center of $\Rep^\sss(G(F))$ can be viewed as the ring of regular
functions on the quotient variety $\Irr^\ttt(L(F))/W^\sss$. 
\end{thm}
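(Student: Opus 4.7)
My approach is to construct a projective progenerator of $\Rep^\sss(G(F))$ and identify the Bernstein center with the center of its endomorphism ring. First I would define a ring homomorphism from $\mathfrak{Z}^\sss(G(F))$ to functions on $\Irr^\sss(G(F))$: any element of the center, being a natural endomorphism of the identity functor, acts on each irreducible $\pi$ by a scalar $\hat z(\pi)$ by Schur's lemma. It then remains to (i) give $\Irr^\sss(G(F))$ the structure of an algebraic variety isomorphic to $\Irr^\ttt(L(F))/W^\sss$, and (ii) show that $z \mapsto \hat z$ is a ring isomorphism onto the regular functions on this variety.

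For (i), every $\pi \in \Irr^\sss(G(F))$ is by definition an irreducible subquotient of $i_P^G\sigma'$ for some $\sigma'$ inertially equivalent to $\sigma$. Standard Harish-Chandra theory, made precise through Bernstein's second adjointness and the Bernstein-Zelevinsky geometric lemma, shows that two such parabolically induced representations share an irreducible subquotient precisely when their cuspidal supports are $W^\sss$-conjugate. Combined with the presentation $\Irr^\ttt(L(F)) \cong X\nonram(L(F))/\{\chi : \sigma\chi \cong \sigma\}$ as a torus quotient by a finite group, this identifies $\Irr^\sss(G(F))$ bijectively with the quotient variety $\Irr^\ttt(L(F))/W^\sss$.

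For (ii), I would construct a progenerator $\Pi$ of $\Rep^\sss(G(F))$ by parabolic induction. On $L(F)$, compactly inducing $\sigma|_{{}^\circ L(F)}$ from ${}^\circ L(F)$ to $L(F)$ yields a progenerator of $\Rep^\ttt(L(F))$ whose endomorphism ring is identified with $\mathcal{O}(\Irr^\ttt(L(F)))$ via the natural action of the commutative group $L(F)/{}^\circ L(F)$. Parabolically inducing to $G(F)$ produces $\Pi$, and using intertwining operators together with the geometric lemma one computes that $\End_{G(F)}(\Pi)$ is a (possibly $2$-cocycle-twisted) crossed product $\mathcal{O}(\Irr^\ttt(L(F))) \rtimes W^\sss$. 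Since $\mathfrak{Z}^\sss(G(F))$ equals the center of $\End_{G(F)}(\Pi)$ for any projective progenerator, and since the center of such a crossed product is the invariant ring $\mathcal{O}(\Irr^\ttt(L(F)))^{W^\sss} = \mathcal{O}(\Irr^\ttt(L(F))/W^\sss)$ regardless of the cocycle, the theorem follows.

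The main obstacle lies in realizing the full group $W^\sss$ (rather than a proper subgroup) inside $\End_{G(F)}(\Pi)$; this amounts to producing, for each $n \in N^\sss$, a nonzero family of intertwining operators between $i_P^G(\sigma\chi)$ and its $n$-twist that depends algebraically on $\chi \in X\nonram(L(F))$. The construction of such normalized operators rests on the rationality and known pole structure of the Harish-Chandra $\mu$-function.
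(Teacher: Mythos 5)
This theorem is not proved in the paper: it is recalled as background and attributed to Bernstein's foundational article \cite{bernstein:center}. So there is no ``paper's own proof'' against which to compare; I will instead compare your sketch against the known arguments.

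Your overall plan (pass to a projective progenerator $\Pi$, identify $\ZZ^\sss$ with the center of $\End_{G(F)}(\Pi)$, and compute that center) is a standard and legitimate strategy, and your steps (i) regarding the variety structure of $\Irr^\sss$ are fine. However, the key structural claim in step (ii) -- that $\End_{G(F)}(\Pi)$ is a ($2$-cocycle-twisted) crossed product $\mathcal{O}(\Irr^\ttt(L(F))) \rtimes W^\sss$ -- is considerably stronger than what is true, and is in fact the hard open part of the subject. Already in the Iwahori case the endomorphism algebra of the progenerator is the Iwahori--Hecke algebra with unequal parameters, which contains $\mathcal{O}(\Irr^\ttt(L(F)))$ as a commutative subalgebra and is free of rank $|W^\sss|$ over it, but satisfies deformed (braid/quadratic) relations rather than the cocycle-twisted group relations; it is neither isomorphic to nor Morita equivalent to the crossed product. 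Establishing that $\End_{G(F)}(\Pi)$ is a (twisted, extended) affine Hecke algebra with good parameters is precisely what Heiermann, Solleveld, and others work hard to prove, and is closely tied to the ABPS conjecture that this paper studies; you cannot invoke it to prove Bernstein's theorem without circularity.

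What you actually need is much weaker, and that is what Bernstein proves directly: (a) restriction to the commutative subalgebra $\mathcal{O}(\Irr^\ttt(L(F)))$ embeds $Z(\End(\Pi))$ into $\mathcal{O}(\Irr^\ttt(L(F)))$ (since $\End(\Pi)$ is a finite, faithful module over $\mathcal{O}(\Irr^\ttt(L(F)))$, and at a generic point of $\Irr^\ttt(L(F))$ the fiber of $\End(\Pi)$ is a matrix algebra so its center is $\C$); (b) a central element of $\End(\Pi)$ must act by the same scalar on $W^\sss$-conjugate cuspidal supports by uniqueness of cuspidal support, forcing the image to land in $\mathcal{O}(\Irr^\ttt(L(F)))^{W^\sss}$; and (c) conversely every $W^\sss$-invariant regular function gives a central element because the Bernstein center of $L(F)$ acts on $i_P^G$ naturally and $W^\sss$-invariant elements descend. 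None of this requires intertwining operators that realize $W^\sss$ as a subgroup of units of $\End(\Pi)$; one only needs the coarse module-theoretic structure. If you replace your crossed-product appeal with this more elementary center computation, your proof becomes correct and is essentially Bernstein's.
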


\section{Review of tame and regular supercuspidal representations}

\subsection{Tame supercuspidal representations}

Assume that $G$ splits over
the maximal tamely ramified extension $F\tame$ of $F$.
A tamely twisted Levi subgroup of $G'$ of $G$
is a reductive $F$-subgroup such that $G'\otimes F\tame$
is an $F\tame$-Levi subgroup of $G\otimes F\tame$.
Let $\Sigma$ denote the datum
$(\vec{G},\pi_{0},\vec{\phi})$,
where $\vec{G}=(G^{0},\ldots,G^{d})$ is a tower
of algebraic subgroups of $G$, 
\[
G^{0}\subsetneq\ldots\subsetneq G^{d}=G,
\]
such that $Z(G^{0})/Z(G)$ is anisotropic and each $G^{i}$ is a
tamely twisted Levi subgroup of $G$, $\pi_{0}$ is a depth-zero supercuspidal
representation of $G^{0}(F)$ and
$\vec{\phi}=(\phi_{0},\ldots,\phi_{d})$
is such that $\phi_{i}:G^{i}(F)\longrightarrow\C\mult$ is
a smooth character of depth $r_{i}>0$.
We require the datum $\Sigma$ to satisfy
several technical conditions.
(See \cite{HM08}*{\S3.1} for a precise list.
Note that this source and others,
e.g., \cite{kaletha:regular-sc},
use the notation $\pi_{-1}$ for the representation that we are calling
$\pi_0$.)

Yu's construction \cite{Yu00} produces a supercuspidal representation
$\pi$ out of the datum $\Sigma$.
Recently, Fintzen \cite{fintzen19} 
has shown that if the residue characteristic $p$
does not divide the order of the Weyl group, this construction
yields all supercuspidal representations of $G(F)$.

\subsection{Regular supercuspidal representations}

Let $(S,\theta)$ be a pair consisting of tame maximal $F$-torus $S$
in $G$
and a character
$\theta:S(F)\longrightarrow\C\mult$.
A Kaletha-Howe
factorization of $(S,\theta)$ is a pair
$(\vec{G},\vec{\phi})$,
where
$\vec{G}=(S=G^{-1}\subset G^{0}\subsetneq\ldots\subsetneq G^{d}=G)$
is a tower of tamely twisted Levi $F$-subgroups
and $\vec{\phi}=(\phi_{-1},\ldots,\phi_{d})$
is a sequence of characters $\phi_{i}:G^{i}(F)\longrightarrow\C\mult$
and satisfying 
\[
\theta=\prod_{i=-1}^{d}\phi_{i}|_{S(F)}
\]
and some additional technical conditions
(see \cite{kaletha:regular-sc}*{Definition 3.6.2}).

If the datum $(S,\theta)$ is a \emph{tame regular elliptic pair},
i.e.,
it satisfies the conditions in \cite{kaletha:regular-sc}*{Definition 3.7.5},
then the pair $(S,\phi_{-1})$ determines a depth-zero supercuspidal
representation $\pi_{0}$ of $G^{0}(F)$ and the datum 
\[
\Sigma=
\left(
(G^{0}\subsetneq\ldots \subsetneq G^{d}=G),\pi_{0},(\phi_{0},\ldots,\phi_{d})
\right)
\]
is a Yu datum and therefore produces a supercuspidal representation
$\pi(S,\theta)$ of $G(F)$. These representations are called \textit{regular}
supercuspidal representations. 

If $(S,\theta)$ and $(S',\theta')$ are tame regular
elliptic pairs in $G$, then $\pi(S,\theta)$ and $\pi(S',\theta')$
are equivalent if and only if
$(S,\theta)$ and $(S',\theta')$
are conjugate in $G(F)$.

\section{A result on multiplicity one upon restriction}

In this section, we will assume the following.

\begin{hyp}
\label{hyp:p}
The residue characteristic $p$ of $F$ is not a \emph{bad prime}
for $G$
(see \cite{kaletha:regular-sc}*{\S 2.1})
and does
not divide the order of the fundamental group of the derived group $G^\der$
of $G$.
\end{hyp}

Let $(S,\theta)$ be a tame regular elliptic pair in $G$ and $\pi(S,\theta)$ the associated regular 
supercuspidal representation.
Let $G'$ be an $F$-subgroup of $G$ such that $G^\der\subset G'\subset G$.
Write $S'=S\cap G'$
and $\theta'=\theta|_{S'(F)}$.
Let
$N_{G(F)}(S',\theta'|_{S'(F)_{0}})$
denote
the stabilizer of $\theta'|_{S'(F)_{0}}$
in $N_{G}(S')(F)$.

\begin{defn}\label{def:reg}
The pair $(S,\theta)$ is \emph{regular in $G'$}
if $N_{G(F)}(S',\theta'|_{S'(F)_{0}}) = S(F)$.
\end{defn}

\begin{thm}
\label{thm:v-regular-MF}
If $(S,\theta)$ is a tame regular elliptic
pair in $G$ which is regular in $G'$,
then $\pi(S,\theta)|_{G'(F)}$
is multiplicity free. 
\end{thm}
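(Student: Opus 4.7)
The plan is to realize $\pi(S,\theta)$ as a compact induction via the Yu-Kaletha construction, decompose $\pi(S,\theta)|_{G'(F)}$ using Mackey's formula, and then control the resulting intertwining calculation using the regularity-in-$G'$ hypothesis. Write $\pi(S,\theta) = \text{c-ind}_{\tilde K}^{G(F)}\tilde\rho$, where $\tilde K\supset S(F)$ is an open subgroup of $G(F)$ that is compact modulo $Z(G)(F)$, and $\tilde\rho$ is an irreducible representation of $\tilde K$ built from $(\vec G, \pi_{0}, \vec\phi)$ whose restriction to $S(F)_{0}$ is $\theta|_{S(F)_{0}}$-isotypic. Mackey's formula applied to the restriction gives
\[
\pi(S,\theta)|_{G'(F)} \;\cong\; \bigoplus_{g \in G'(F)\backslash G(F)/\tilde K} \text{c-ind}_{G'(F)\cap {}^g\tilde K}^{G'(F)}\bigl({}^g\tilde\rho|_{G'(F)\cap {}^g\tilde K}\bigr).
\]

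By the standard theory of compactly induced representations, multiplicity-freeness of the direct sum is equivalent to showing that for every $h\in G(F)$,
\[
\text{Hom}_{G'(F)\cap \tilde K \cap {}^h\tilde K}\bigl(\tilde\rho,\;{}^h\tilde\rho\bigr) \neq 0 \;\Longrightarrow\; h \in G'(F)\cdot \tilde K,
\]
with the Hom-space one-dimensional when the implication holds. I would establish this by further restricting a nonzero intertwiner to the compact torus $S'(F)_{0}\subset \tilde K\cap G'(F)$. Since $\tilde\rho|_{S(F)_{0}}$ is $\theta|_{S(F)_{0}}$-isotypic, its further restriction to $S'(F)_{0}$ is $\theta'|_{S'(F)_{0}}$-isotypic, and likewise ${}^h\tilde\rho$ becomes isotypic for the $h$-translate of this character on ${}^hS'(F)_{0}$. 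The existence of an intertwiner then forces, after absorbing an element of $\tilde K$ into $h$, that $h$ normalizes the pair $(S',\theta'|_{S'(F)_{0}})$ in $G(F)$; the regularity-in-$G'$ hypothesis (Definition~\ref{def:reg}) then places $h$ in $S(F)\subset \tilde K$, giving the claim.

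The main obstacle is the passage from ``${}^h\tilde\rho$ intertwines $\tilde\rho$ over a subgroup of $G'(F)$'' to ``$h$ normalizes $(S',\theta'|_{S'(F)_{0}})$ in $G(F)$''. Since $\tilde\rho$ is built up through several strata via successive twists by the characters $\phi_{i}$ of $G^{i}(F)$, the intertwining must be analyzed one stratum at a time and reduced to the depth-zero $S$-datum; only then can the regularity-in-$G'$ condition be applied. Executing this peeling, together with the one-dimensionality claim, relies on the refactorization machinery of Yu's construction and on Hypothesis~\ref{hyp:p}, which ensures that the tame cohomology and normalizer computations for the twisted Levi tower $\vec G$ behave as expected.
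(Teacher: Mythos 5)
Your route is genuinely different from the paper's, and it has a real gap at the crucial step. The paper does not do any Mackey or intertwining computation. Instead it observes that $(S',\theta')$ is itself a tame regular elliptic pair in $G'$ (by regularity in $G'$ plus \cite{kaletha:regular-sc}*{Lemma 3.6.5}), builds an explicit equivariant embedding of $\pi(S',\theta')$ into $\pi(S,\theta)|_{G'(F)}$ using that $\rho'$ occurs in $\rho|_{K'}$, invokes Clifford theory to write $\pi(S,\theta)|_{G'(F)}$ as a sum of $G(F)$-conjugates of $\pi(S',\theta')$, and then uses the \emph{classification theorem} for regular supercuspidals (${}^g\pi(S',\theta')\cong\pi(S',\theta')$ if and only if $({}^gS',{}^g\theta')$ is $G'(F)$-conjugate to $(S',\theta')$) to show the conjugates over distinct cosets are pairwise non-isomorphic. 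The entire stratum-by-stratum intertwining analysis that you identify as the ``main obstacle'' is thereby absorbed into one citation.

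The specific step where your sketch fails is the claim that $\tilde\rho|_{S(F)_0}$ is $\theta|_{S(F)_0}$-isotypic. This is not true. The restriction of $\tilde\rho$ to $G^0(F)_{y,0}$ contains (the inflation of) a Deligne--Lusztig cuspidal representation $\pm R_{\mathsf S,\bar\theta}$, and the restriction of such a representation to its maximal torus $\mathsf S(\mathfrak f)$ is not a multiple of a single character: by the Deligne--Lusztig character formula its trace on regular elements of $\mathsf S(\mathfrak f)$ is a signed sum over the full Weyl-orbit $\{\bar\theta\circ w\}$, and the dimension $\prod(q^{d_i}-1)$ far exceeds what a single orbit of characters can account for. (Already for $\mathrm{GL}_2$ with an unramified elliptic torus, $R_{\mathsf S,\bar\theta}$ has dimension $q-1$ and its restriction to $\mathsf S(\mathbb F_q)\cong\mathbb F_{q^2}^\times$ contains $\bar\theta$, $\bar\theta^q$, and $q-3$ further characters.) So restricting an intertwiner to $S'(F)_0$ does not force the desired normalization condition; the character-theoretic leverage you want simply is not there. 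Even granting that, you have only sketched the peeling of the higher strata, which in the paper's approach never has to be redone because it is already packaged in Kaletha's equivalence theorem for tame regular elliptic pairs. If you want to salvage a Mackey-style proof you would need a genuine intertwining theorem for Yu's construction (in the style of \cite{adler-roche:intertwining} or of the intertwining bounds in \cite{Yu00}), which is substantially more work than what the paper does.
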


We observe that $(S,\theta)$ is regular in $G'$ if and only if
the components of $\pi|_{G'(F)}$ are regular supercuspidals.
If $(S,\theta)$ is not regular in $G'$, then higher multiplicities
can occur, even though $(S,\theta)$ is regular in $G$.
See \cite{adler-dprasad:decomposition}*{\S7} for an example.

Looking at general irreducible representations 
of $G(F)$, there are many situations where one can prove
that the restriction to $G'(F)$ is multiplicity free.
See \cite{choiy:multiplicity-restriction}
for a conjecture, together with a proof
for tempered representation
under some assumptions about the local Langlands correspondence.
See \cite{adler-dprasad:decomposition} for a conjecture,
reduction to the tempered case,
examples, and an announcement of Theorem \ref{thm:v-regular-MF}.

\begin{proof}[Proof of Theorem \ref{thm:v-regular-MF}]
Since $(S,\theta)$ is regular in $G'$, it follows from
\cite{kaletha:regular-sc}*{Definition 3.7.5 and Lemma 3.6.5} that $(S',\theta')$
is a tame regular elliptic pair in $G'$.
Let $V'$ (resp.\ $V$)
be the space realizing the representation $\pi(S',\theta')$
(resp.\ $\pi(S,\theta)$).
One can describe these spaces explicitly, since each regular
supercuspidal representation is induced from a representation
of a compact open subgroup.
We now describe an embedding of $V'$
in $V$. 

From Yu's construction \cite{Yu00},
we see that
\begin{itemize}
\item
$\pi(S,\theta)$ is induced from a smooth irreducible representation
$\rho$ of an open subgroup $K$ of $G(F)$ that is compact modulo
the center of $G(F)$;
\item
$\pi(S',\theta')$ is similarly induced from $(K',\rho')$;
and
\item
we can realize $\rho'$ as an irreducible 
subrepresentation of $\rho|_{K'}$.
\end{itemize}
Let $W$ and $W'$ be the spaces realizing $\rho$ and $\rho'$.
Thus, $W'$ is a subspace of $W$.
We have 
\[
W=\bigoplus_{k\in K/\Stab_{K}(W')} {}^k W'
.
\]
Pick coset representatives $k_{1},\ldots,k_{n}$
of $K/\Stab_K W'$.
Let 
\[
\iota:\End(W')\longrightarrow\End(W)
\]
be given by
\[
\iota \colon \lambda'\mapsto\bigoplus_{i=1}^{n} \lsup{k_i}\lambda'.
\]
Then for all $k'\in K'$,
$\iota(\rho'(k'))=\rho(k')$.
Define an embedding $\iota:V'\longrightarrow V$
by setting, for $f'\in V'$, $\iota(f')$ to be the function $f\in V$
given by
\[
f(g)=\begin{cases}
0 & \text{if $g\notin G'(F)K$},\\
\iota(f'(g'))\rho(k) &
	\text{if $f=g'k$, where $g'\in G'(F),\: k\in K$}
\end{cases}.
\]
To see that the function $f = \iota(f')$ is well defined,
let $g_{1}'k_{1}=g_{2}'k_{2}$,
where $g_{1}',g_{2}'\in G'(F)$ and $k_{1},k_{2}\in K$.
Then
$g_{1}^{\prime-1}g_{2}'=k_{1}k_{2}^{-1}\in G'(F)\cap K=K'$.
We have 
\begin{align*}
\iota(f'(g_{2}')\rho(k_{2})
&= \iota(f'(g_{1}'g_{1}^{\prime-1}g_{2}')\rho(k_{2})\\
&= \iota(f'(g_{1}'))\rho(k_{1}k_{2}^{-1})\rho(k_{2})\\
&= \iota(f'(g_{1}'))\rho(k_{1}).
\end{align*}
This proves that $f$ is well defined.
Now since $S(F)\subset K$,
it follows that $S(F)$ preserves $\iota(V')$.
Thus 
\[
\Stab_{G(F)}\iota(V')\supseteq S(F)G'(F).
\]
By Clifford theory
\[
\pi(S,\theta)
=
\bigoplus_{g\in G(F)/{\Stab_{G(F)}\iota(V')}} \lsup g \pi(S',\theta').
\]
We claim that for $g\notin\Stab_{G(F)}\iota(V')$,
$\lsup g \pi(S',\theta')\ncong\pi(S',\theta')$.
For suppose that $\lsup g \pi(S',\theta')\cong\pi(S',\theta')$
for some $g\in G(F)$.
Equivalently, $(\lsup gS',\lsup g\theta')$
is $G'(F)$-conjugate to $(S',\theta')$.
So there exists a $g'\in G'(F)$ such that $g'g$
stabilizes $(S',\theta')$. So $g\in G'(F)N_{G(F)}(S',\theta')$.
But by hypothesis,
$N_{G(F)}(S',\theta'|_{S'(F)_{0}})\subset S(F)$.
It follows that $g\in\Stab_{G(F)}\iota(V')$. This
proves the claim. 

Thus the summands
$\lsup g\pi(S',\theta')$ for $g\in G(F)/\Stab_{G(F)}\iota(V')$
are non-isomorphic and consequently, $\pi(S,\theta)|_{G'(F)}$
is multiplicity free. 
\end{proof}

\section{Hecke algebra for supercuspidal blocks}

Let $(\pi ,V)$ denote a supercuspidal representation of $G(F)$.
Then $\pi |_\zGF$ is a finite direct sum of finite representations
of $\zGF$, all of which are $G(F)$-conjugate up to isomorphism.
Fix an irreducible component $(^{\circ}\pi ,{}^{\circ}V)$ of $\pi |_\zGF$.
Then $\End_{G(F)}(\mathrm{ind}_\zGF^{G(F)}{}^{\circ}\pi)$
is canonically isomorphic to the convolution algebra $\HH(G,{}^{\circ}\pi)$
of $^{\circ}\pi $-spherical functions (\cite{roche:BC}*{\S1.1.2})
and the Bernstein component $\Rep^\sss(G(F))$ is canonically
equivalent to the module category of $\HH(G,{}^{\circ}\pi^\vee )$,
where $\sss=[G,\pi ]_{G}$. 
\begin{thm}
\label{thm:roche}\cite{roche:BC}*{Prop.\ 1.6.1.2}
The Hecke algebra
$\HH(G,{}^{\circ}\pi )$
is commutative if and only if
$\pi |_\zGF$ is multiplicity free. 
\end{thm}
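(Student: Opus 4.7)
The plan is to realize $\HH(G,{}^\circ \pi)$ as a twisted group algebra of an abelian group, and then match its commutativity to the multiplicity-freeness of $\pi|_{\zGF}$ via Clifford theory.

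First I would note that $\zGF$ is a normal open subgroup of $G(F)$ whose quotient $G(F)/\zGF$ is a finitely generated free abelian group. A Mackey decomposition, using that $\zGF$ is normal in $G(F)$, yields
\[
\HH(G,{}^\circ \pi) \;\cong\; \bigoplus_{g\in G(F)/\zGF}\Hom_{\zGF}\!\bigl({}^\circ \pi,\,\lsup{g}{}^\circ \pi\bigr),
\]
and by Schur's lemma each summand is one-dimensional when $\lsup{g}{}^\circ \pi\cong {}^\circ \pi$ and zero otherwise. Setting $S := \{g\in G(F)\mid \lsup{g}{}^\circ \pi\cong {}^\circ \pi\}$ and $H := S/\zGF$, the convolution structure equips $\HH(G,{}^\circ \pi)$ with the structure of a twisted group algebra $\C[H,\alpha]$ for some $2$-cocycle $\alpha\in H^{2}(H,\C\mult)$, with only finitely-supported sums allowed; in particular, $H$ is abelian, being a subgroup of $G(F)/\zGF$.

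Next, I would invoke the standard fact that for an abelian group $H$, the twisted group algebra $\C[H,\alpha]$ is commutative if and only if the bimultiplicative commutator pairing $b(g,h) := \alpha(g,h)\alpha(h,g)^{-1}$ on $H\times H$ is trivial. Independently, Clifford theory applied to the normal inclusion $\zGF\subset G(F)$ produces an irreducible representation $\tilde\pi$ of $S$ with $\pi \cong \mathrm{ind}_{S}^{G(F)}\tilde\pi$ and $\tilde\pi|_{\zGF}\cong n\cdot {}^\circ \pi$, so that $\pi|_{\zGF} \cong \bigoplus_{g\in G(F)/S} n\cdot\lsup{g}{}^\circ \pi$. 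The multiplicity $n$ is the dimension of an irreducible projective representation of $H$ whose $2$-cocycle is $\alpha^{-1}$, and for abelian $H$ this dimension is $1$ precisely when $b$ is trivial. Chaining these equivalences: $\pi|_{\zGF}$ is multiplicity free $\iff n=1 \iff b$ is trivial $\iff \C[H,\alpha]$ is commutative $\iff \HH(G,{}^\circ \pi)$ is commutative.

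The main obstacle will be the careful identification of the cocycle $\alpha$ encoding the Hecke algebra structure with the inverse of the obstruction cocycle arising in the Clifford extension of ${}^\circ \pi$ to $S$; clean bookkeeping of this identification is the technical heart of the argument. The possibility that $H$ be infinite (i.e., a free abelian group of positive rank) causes no essential issue, since both the commutativity criterion for $\C[H,\alpha]$ and the dimension formula for its irreducible projective modules are insensitive to this.
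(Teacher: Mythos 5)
The paper does not give its own proof of this theorem; it simply cites Roche \cite{roche:BC}*{Prop.\ 1.6.1.2}. Your Clifford-theoretic argument — using normality of $\zGF$ and Schur's lemma to realize $\HH(G,{}^\circ\pi)$ as a twisted group algebra $\C[H,\alpha]$ over the abelian group $H=S/\zGF$, and then matching the commutator pairing $b(g,h)=\alpha(g,h)\alpha(h,g)^{-1}$ against the Clifford obstruction that forces the multiplicity $n$ of ${}^\circ\pi$ inside the isotypic $S$-representation $\tilde\pi$ — is precisely the standard proof underlying Roche's statement, and your outline is correct. The single step you flag but defer, namely that the convolution cocycle of the Hecke algebra agrees (up to inversion and coboundary) with the Clifford obstruction cocycle, is indeed the technical crux, but it is routine bookkeeping once one fixes, for each $h\in H$ with representative $s_h\in S$, an intertwiner $T_h\colon{}^\circ\pi\to\lsup{s_h}{}^\circ\pi$ and observes that the same family $\{T_h\}$ simultaneously gives the canonical basis of $\HH(G,{}^\circ\pi)$ and the projective extension of ${}^\circ\pi$ to $S$. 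Your remark about $H$ possibly being infinite is also handled correctly: $H$ is torsion-free (a subgroup of $G(F)/\zGF$), and since $\C\mult$ is divisible, $H^2(H,\C\mult)$ is detected entirely by alternating bicharacters, so $b$ trivial $\iff$ $\alpha$ is a coboundary $\iff$ $n=1$ $\iff$ $\C[H,\alpha]$ is commutative, exactly as you say.
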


\begin{prop}\label{prop:MF-0G}
The restriction $\pi |_\zGF$
is multiplicity free if 
the split rank of the identity component of the center of $G$ is $0$ or $1$. 
\end{prop}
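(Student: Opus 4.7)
The plan is to apply Clifford theory to the normal inclusion $\zGF \triangleleft G(F)$, exploiting the fact that $G(F)/\zGF$ is free abelian of very low rank in the cases under consideration. Recall that $\pi|_\zGF$ is a finite direct sum of $G(F)$-conjugates of an irreducible component $\lsup{\circ}\pi$, as noted in the paragraph preceding Theorem~\ref{thm:roche}. Writing $\nu$ for an element of $\Hom_F(G,\mathbb{G}_{\mathrm{m}})$, the homomorphism $g \mapsto \bigl(\nu \mapsto v_F(\nu(g))\bigr)$ identifies $G(F)/\zGF$ with a full-rank lattice in $\Hom\bigl(\Hom_F(G,\mathbb{G}_{\mathrm{m}}),\R\bigr)$; its rank $r$ equals the $F$-split rank of the identity component of the center of $G$.

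If $r=0$, then $\zGF = G(F)$ and $\pi|_\zGF = \pi$ is irreducible, so multiplicity-freeness is automatic. Suppose $r=1$, and set $T := \Stab_{G(F)}(\lsup{\circ}\pi)$, the stabilizer of the isomorphism class of $\lsup{\circ}\pi$ under the conjugation action of $G(F)$. Then $T$ is open, contains $\zGF$, and $T/\zGF$ is a subgroup of $G(F)/\zGF \cong \mathbb{Z}$, hence itself cyclic. Standard Clifford theory for a smooth irreducible representation restricted to a normal subgroup of a larger open subgroup identifies the $\lsup{\circ}\pi$-isotypic component of $\pi|_T$ with a tensor product $\widetilde{\lsup{\circ}\pi} \otimes \tau$, where $\widetilde{\lsup{\circ}\pi}$ is a projective extension of $\lsup{\circ}\pi$ to $T$ and $\tau$ is an irreducible projective representation of $T/\zGF$ with the inverse $2$-cocycle. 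The multiplicity of $\lsup{\circ}\pi$ in $\pi|_\zGF$ then equals $\dim \tau$.

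The crux is the observation that any $2$-cocycle on a cyclic group (finite or infinite) with values in $\C\mult$ is a coboundary, so $\tau$ is an ordinary irreducible representation of the cyclic group $T/\zGF$, hence one-dimensional. Consequently $\dim\tau = 1$, the multiplicity of $\lsup{\circ}\pi$ in $\pi|_\zGF$ is $1$, and the restriction is multiplicity free. The only step requiring some care is the use of the projective form of Clifford theory in the smooth-representation setting; however, since $\pi|_\zGF$ already breaks up into finitely many irreducible summands, the argument transports to a finite-dimensional multiplicity space, where classical Clifford theory applies verbatim.
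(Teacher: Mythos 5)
Your proof is correct and follows the same underlying idea as the paper's: in the rank $\leq 1$ case the relevant quotient is cyclic, and Clifford theory together with the vanishing of $H^2(\text{cyclic},\C\mult)$ forces multiplicity one. The paper simply notes that $G(F)/(Z(F)\cdot\zGF)$ is cyclic and delegates the Clifford-theoretic argument to Roche's Remark 1.6.1.3 in \cite{roche:BC}, whereas you have unfolded that argument explicitly (working with $T/\zGF \subseteq G(F)/\zGF \cong \mathbb{Z}$ rather than $G(F)/(Z(F)\cdot\zGF)$, which is an equivalent formulation).
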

\begin{proof}
Let $Z$ denote the center of $G$.
Then in this situation,
$G(F)/(Z(F)\cdot\zGF)$ is cyclic. The result then follows from \cite{roche:BC}*{Remark 1.6.1.3}.
\end{proof}

Now let $\pi$ be
a tame supercuspidal representation of $G(F)$.
Then by Yu's construction, $\pi $ is compactly
induced from a representation $\rho$ of an open, compact mod center subgroup
$K$ of $G(F)$.
Let $^{\circ}K$ denote the
maximal compact subgroup $K\cap{}\zGF$ of $K$,
and let $^{\circ}\rho$
be any irreducible component of $\rho|_{^{\circ}K}$.
Then $(^{\circ}K,{}^{\circ}\rho$)
is an $\sss$-type where $\sss=[G,\pi]_{G}$
(\cite{bushnell98}*{Prop.\ 5.4}).
The representation $\pi $ is constructed
out of a depth-zero representation $\pi _{0}$
of a twisted Levi subgroup
$G^{0}$ of $G$ together with additional data. The representation
$\pi _{0}$ is compactly induced from a representation $\rho_{0}$
of a compact mod center subgroup $K^{0}$ of $G^{0}(F)$.
Define an $\sss_0$-type
$(^{\circ}K^0,{}^{\circ}\rho_{0}$) analogously,
where $\sss_{0}=[G^{0}(F),\pi_{0}]_{G^{0}(F)}$. 

Jiu-Kang Yu makes the
following conjecture \cite{Yu00}*{Conjecture 0.2}. 

\begin{conj}[Yu's conjecture]
\label{conj:yu}
%
There is an algebra isomorphism 
\[
\HH(G,^{\circ}\rho)\cong\HH(G^{0},{}^{\circ}\rho_{0}).
\]
%
\end{conj}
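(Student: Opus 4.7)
The plan is to establish Yu's conjecture in the cases enumerated in Corollary \ref{cor:yu-true} by reducing each to a commutative situation and then invoking Theorem \ref{mm-thm} (or its precursor for supercuspidal blocks) for the Bernstein centers. The key observation is that, by Theorem \ref{thm:roche}, $\HH(G,{}^\circ\pi)$ is commutative if and only if $\pi|_{\zGF}$ is multiplicity free, so the task reduces to establishing multiplicity freeness for $\pi$ and, in parallel, for the depth-zero representation $\pi_0$.

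To establish multiplicity freeness I would argue case by case. For the rank-one case, Proposition \ref{prop:MF-0G} is immediate. For the regular case under Hypothesis \ref{hyp:p}, I would apply Theorem \ref{thm:v-regular-MF} with $G' = G^\der$: the hypothesis that the components of $\pi|_{G^\der(F)}$ are regular supercuspidals is, by the observation following Definition \ref{def:reg}, equivalent to $(S,\theta)$ being regular in $G^\der$, and Theorem \ref{thm:v-regular-MF} then yields multiplicity freeness of $\pi|_{G^\der(F)}$. Since $G^\der(F) \subset {}^\circ G(F)$, any multiplicity exceeding one in $\pi|_{\zGF}$ would propagate to the further restriction, so multiplicity freeness of $\pi|_{\zGF}$ follows. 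For the generic case I would invoke uniqueness of Whittaker models with respect to a fixed non-degenerate character to rule out higher multiplicities.

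Once both Hecke algebras are known to be commutative, I would identify each with the Bernstein center of its block. Since $({}^\circ K,{}^\circ\rho)$ is an $\sss$-type with $\sss=[G,\pi]_G$, the category $\Rep^\sss(G(F))$ is equivalent to the module category of $\HH(G,{}^\circ\rho^\vee)$, so the center of that module category, namely $\ZZ^\sss(G(F))$, agrees with the center of $\HH(G,{}^\circ\rho^\vee)$; in the commutative case this gives $\HH(G,{}^\circ\rho^\vee)\cong\ZZ^\sss(G(F))$, and since a commutative algebra coincides with its opposite, also $\HH(G,{}^\circ\rho)\cong\ZZ^\sss(G(F))$. The parallel identification for $G^0$ yields $\HH(G^0,{}^\circ\rho_0)\cong\ZZ^{\sss_0}(G^0(F))$ with $\sss_0 = [G^0,\pi_0]_{G^0}$. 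The isomorphism $\ZZ^\sss(G(F)) \cong \ZZ^{\sss_0}(G^0(F))$ in the regular case is supplied by Theorem \ref{mm-thm}; for the rank-one and generic cases, tameness of $\pi$ places us in the supercuspidal setting covered by the earlier result \cite{mishra19}*{Theorem 6.1}. Composing the three identifications delivers the desired isomorphism.

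The main obstacle will be confirming that the multiplicity-free hypothesis truly transfers from $\pi$ down to $\pi_0$, so that the parallel argument yields commutativity of $\HH(G^0,{}^\circ\rho_0)$. For the rank-one case this is automatic because the split rank of the center of $G^0$ dominates that of $G$. For the regular case the delicate point is to trace through the Kim--Yu construction and verify that regularity of $(S,\theta)$ in $G^\der$ forces regularity of the depth-zero pair $(S,\phi_{-1})$ in $(G^0)^\der$; this is a combinatorial verification involving the roots of $S$ in $G^0$ versus those in $G$, together with the way the twisting characters $\phi_0,\dots,\phi_d$ factor $\theta$. A secondary subtlety is the generic case, where one must check that genericity descends compatibly through the Yu datum so that $\pi_0$ remains generic in $G^0$.
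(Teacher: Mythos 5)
Your proposal follows essentially the same strategy as the paper's proof of Corollary \ref{cor:yu-true}: reduce to commutativity of both Hecke algebras via Theorem \ref{thm:roche}, establish multiplicity-freeness of $\pi|_{\zGF}$ and $\pi_0|_{{}^\circ G^0(F)}$ case by case (Proposition \ref{prop:MF-0G} for the rank-one case, Theorem \ref{thm:v-regular-MF} with $G'=G^\der$ for the regular case, uniqueness of Whittaker models for the generic case), and then identify each commutative Hecke algebra with its Bernstein center, which are matched using the isomorphism from \cite{mishra19}. The two ``obstacles'' you flag at the end are precisely the points the paper dispatches by citation: descent of regularity from $(S,\theta)$ in $G^\der$ to the depth-zero pair in $G^0\cap G^\der$ is \cite{kaletha:regular-sc}*{Lemma 3.6.5}, and descent of genericity from $\pi$ to $\pi_0$ rests on an unpublished result of DeBacker and Tsai (which the paper itself acknowledges is not yet in the literature); you correctly identify these as the remaining work rather than claiming to have proved them.
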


\begin{cor}
\label{cor:yu-true}
Yu's conjecture holds if any of the following conditions hold:
\begin{enumerate}[(a)]
\item
The split rank of the identity component of the center of $G$ is $0$ or $1$. 
\item
Hypothesis \ref{hyp:p} holds, 
and
the irreducible components of $\pi |_{G^\der(F)}$ are regular.
\item
$\pi$ is generic.
\end{enumerate}
\end{cor}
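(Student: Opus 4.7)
The plan is to deduce Yu's conjecture in each case from Theorem \ref{thm:roche} combined with the Bernstein-center isomorphism for tame supercuspidal blocks established in \cite{mishra19}*{Theorem 6.1} (whose regular-Bernstein-block generalization appears here as Theorem \ref{mm-thm}). Specifically, I aim to show that in each case both Hecke algebras $\HH(G, {}^\circ\rho)$ and $\HH(G^0, {}^\circ\rho_0)$ are commutative, whence each coincides with its own Bernstein center; the required Hecke-algebra isomorphism then reads off from the isomorphism of Bernstein centers. By Theorem \ref{thm:roche}, commutativity is equivalent to multiplicity-freeness of $\pi|_\zGF$ and of $\pi_0|_{{}^\circ G^0(F)}$, so the remaining task in each of (a), (b), (c) is to verify these two restriction statements.

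For case (a), Proposition \ref{prop:MF-0G} directly yields multiplicity-freeness of $\pi|_\zGF$. Since the Yu datum forces $Z(G^0)/Z(G)$ to be anisotropic, the identity components $Z(G^0)^\circ$ and $Z(G)^\circ$ share the same $F$-split rank; the split-rank hypothesis therefore descends to $G^0$, and Proposition \ref{prop:MF-0G} applied to $\pi_0$ completes the case.

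For case (b), the observation immediately after Theorem \ref{thm:v-regular-MF} identifies the hypothesis with $(S, \theta)$ being regular in $G^\der$ in the sense of Definition \ref{def:reg}, so Theorem \ref{thm:v-regular-MF} yields multiplicity-freeness of $\pi|_{G^\der(F)}$. To upgrade to $\zGF$, let $H := G^\der(F) \cdot Z(F)\bdd$, a normal subgroup of $\zGF$ of finite index; because $Z(F)\bdd$ acts through the central character of $\pi$, multiplicity-freeness on $G^\der(F)$ passes to $H$, whereupon a short Clifford-theoretic argument transfers it to $\zGF$ (any repeated summand of $\pi|_\zGF$ would restrict to a multiply-occurring constituent of $\pi|_H$). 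The corresponding statement for $\pi_0$ is obtained by tracing the Kaletha-Howe factorization through the twisted Levi tower in $\Sigma$: regularity of $(S, \theta)$ in $G^\der$ descends to regularity of $(S, \phi_{-1})$ in $(G^0)^\der$, and a reapplication of Theorem \ref{thm:v-regular-MF} closes the step. For case (c), the uniqueness of the Whittaker functional on a generic $\pi$ precludes any repetition in $\pi|_\zGF$ (each repeated summand would contribute its own Whittaker functional to $\pi$); genericity is preserved under Yu's construction with compatible Whittaker data, yielding the analogous statement for $\pi_0$ on $G^0$.

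The main obstacle I anticipate lies in case (b), where one must bridge multiplicity-freeness on the algebraic subgroup $G^\der(F)$ and the non-algebraic subgroup $\zGF$ (since Theorem \ref{thm:v-regular-MF} applies only to $F$-subgroups $G'$ containing $G^\der$), and must simultaneously propagate the regularity condition from $G^\der$ along the twisted Levi tower to $(G^0)^\der$. The Clifford bridge between $G^\der(F)$ and $\zGF$ is essentially routine; the delicate point is the descent of regularity down the tower, which requires careful attention to how the regular-in-derived condition for $(S, \theta)$ interacts with $\phi_{-1}$ and with the anisotropic part of $S$.
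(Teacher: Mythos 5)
Your proposal is correct and follows essentially the same route as the paper: reduce Yu's conjecture to commutativity of both Hecke algebras via the Bernstein-center isomorphism of \cite{mishra19}, then verify multiplicity-freeness of $\pi|_{\zGF}$ and $\pi_0|_{{}^\circ G^0(F)}$ case by case using Proposition~\ref{prop:MF-0G}, Theorem~\ref{thm:v-regular-MF}, and genericity respectively. The only differences are cosmetic: the paper treats the passage from $G^\der(F)$ to $\zGF$ as implicit where you spell out a Clifford bridge, and the paper explicitly flags that the transfer of genericity from $\pi$ to $\pi_0$ in case (c) rests on an unpublished result of DeBacker--Tsai, a caveat you should retain.
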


\begin{proof}
By \cite{mishra19}*{Cor.\ 6.3},
the centers of
$\HH(G,^{\circ}\rho)$ and $\HH(G^{0},{}^{\circ}\rho_{0})$
are isomorphic.
Thus, the conjecture is true if both of these Hecke algebras
are commutative.
From Theorem \ref{thm:roche}
it will be enough to show that $\pi$ and $\pi_0$ restrict
without multiplicity to $\zGF$ and $^\circ G^0(F)$, respectively.
Suppose condition (a) holds.
Then our restrictions are multiplicity free from
Proposition \ref{prop:MF-0G},
together with the fact that the centers of $G$ and $G^0$
have the same split rank.
Suppose condition (b) holds.
By \cite{kaletha:regular-sc}*{Lemma 3.6.5},
the components of 
$\pi_{0}|_{(G^{0}\cap G^\der)(F)}$
are regular.
Thus our restrictions are multiplicity free from
Theorem \ref{thm:v-regular-MF}.
Suppose condition (c) holds.
From a result of Stephen DeBacker and Cheng-Chiang Tsai,
$\pi_0$ is also generic,
and so our restrictions are multiplicity free from
\cite{roche:BC}*{Remark 1.6.1.3}.
\end{proof}

Note that in proving that Yu's conjecture holds under condition (c),
we have used a result that is not yet in
the literature.
Readers who are unhappy about this can strengthen condition (c) to assert
that both $\pi$ and $\pi_0$ are generic.

\section{Bernstein center of regular blocks}
\label{sec:B-center}

Assume Hypothesis \ref{hyp:p}.

Let $L$
be an $F$-Levi subgroup of $G$ and let $(S,\theta)$ be a tame regular
elliptic pair in $L$. Write $\pi = \pi(S,\theta)$ for the associated regular
supercuspidal representation of $L(F).$ The pair $(S,\theta)$ produces
a chain $(S=G^{-1}\subseteq G^{0}\varsubsetneq\ldots\varsubsetneq G^{d}=G)$
of twisted Levi subgroups of $G$. Write $L^{0}=L\cap G^{0}$. Let
$\phi_{i}:G^{i}(F)\longrightarrow\C\mult$, $-1\leq i\leq d$,
be the sequence of characters obtained from a Howe factorization
of $(S,\theta)$.
Then $\theta=\theta_{-}\theta_{+}$, where $\theta_{-}=\phi_{-1}|_{S(F)}$
and $\theta_{+}=\prod_{i=0}^{d}\phi_{i}|_{S(F)}$. It follows from
\cite{kaletha:regular-sc}*{Lemma 3.6.5} that $(S,\theta_{-})$
is a depth-zero tame regular elliptic pair for $L^0$.
Let $\pi_{0}(S,\theta_{-})$ denote
the associated depth-zero regular supercuspidal representation of $L^{0}(F)$. 

Write $\ttt=[L,\pi]_{L}$ and $\ttt_{0}=[L^{0},\pi_{0}]_{L^{0}}$.

\begin{thm}\label{mm19}
\cite{mishra19}*{Theorem 6.1}The map 
\[
\fff:\pi\otimes\nu\in\Irr^{\ttt}(L(F))\mapsto\pi_{0}\otimes(\nu|_{L^{0}(F)})\in\Irr^{\ttt_{0}}(L^{0}(F)),\quad\nu\in X\nonram(L(F)),
\]
 is an isomorphism of varieties. 
\end{thm}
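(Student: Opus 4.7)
The plan is to identify both sides of $\fff$ with quotients of unramified character tori and then reduce the theorem to (i) an isomorphism of those tori induced by restriction of characters, together with (ii) a bijective correspondence between the stabilizer subgroups that cut out the relevant quotients. By the description of the Bernstein center recalled above, $\Irr^\ttt(L(F))$ is non-canonically isomorphic to $X\nonram(L(F))/\Stab(\pi)$, where $\Stab(\pi)=\{\nu\in X\nonram(L(F))\mid\pi\otimes\nu\cong\pi\}$, and likewise $\Irr^{\ttt_0}(L^0(F))\cong X\nonram(L^0(F))/\Stab(\pi_0)$. Since $\fff$ is manifestly induced by restriction $\nu\mapsto\nu|_{L^0(F)}$, the theorem will follow from (i) and (ii).

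For (i), I would exploit the standing requirement from the Howe factorization that $Z(L^0)^\circ/Z(L)^\circ$ is anisotropic. Because a split torus has no anisotropic quotient, this forces $(Z(L)^\circ)\spl$ and $(Z(L^0)^\circ)\spl$ to coincide up to finite isogeny. Dualizing the resulting map of cocharacter lattices, the restriction of unramified characters $X\nonram(L(F))\to X\nonram(L^0(F))$ becomes an isogeny of complex algebraic tori. A finer analysis pinning down the kernel — working with the short exact sequence relating $L^0(F)/{}^{\circ}L^0(F)$ and $L(F)/{}^{\circ}L(F)$ via the anisotropic quotient — upgrades this to an isomorphism of algebraic tori.

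For (ii), which is the crux, I would analyze how twisting $\pi(S,\theta)$ by an unramified character $\nu$ of $L(F)$ interacts with Yu's construction. The expectation, consistent with Kaletha's parametrization, is that $\pi(S,\theta)\otimes\nu\cong\pi(S,\theta\cdot\nu|_{S(F)})$ and that in a Howe factorization of $\theta\cdot\nu|_{S(F)}$ the twist can be absorbed entirely into the depth-zero component $\phi_{-1}$, leaving the positive-depth characters $\phi_0,\ldots,\phi_d$ untouched. This identifies the depth-zero regular supercuspidal attached to the twisted datum with $\pi_0\otimes(\nu|_{L^0(F)})$, so that $\nu\in\Stab(\pi)$ if and only if $\nu|_{L^0(F)}\in\Stab(\pi_0)$. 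Combined with (i), restriction then descends to an algebraic bijection of quotient varieties, whose algebraic inverse is obtained by extending characters from $L^0(F)$ to $L(F)$ via the isomorphism from (i); this yields the sought isomorphism of varieties.

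The main obstacle is (ii): one must verify that an unramified twist of $(S,\theta)$ affects only the depth-zero stratum of a Howe factorization, which requires careful bookkeeping through the successive refinements in the Kaletha–Yu construction. Hypothesis~\ref{hyp:p} enters here to guarantee that regular supercuspidals are classified by their tame regular elliptic pairs up to $G(F)$-conjugacy (as recalled at the end of the previous section), so that isomorphism of twisted supercuspidals can be tested at the level of the data $(S,\theta)$.
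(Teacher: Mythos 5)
The paper does not actually prove this result; it is imported verbatim from \cite{mishra19}*{Theorem 6.1}. So there is no internal proof to compare against, and I will assess your argument on its own merits.

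Your overall strategy---realize both sides as quotients of unramified-character tori by stabilizers, show that the stabilizers correspond under restriction (your step (ii)), and conclude an isomorphism of quotient varieties---is the right shape, and step (ii), using that an unramified twist of $(S,\theta)$ has depth zero and hence is absorbed into $\phi_{-1}$ in a Howe factorization, is correct in outline (it needs Lemma~\ref{lem:stab}-type control to transfer conjugacy between $L(F)$ and $L^0(F)$, but that machinery is available).

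There is, however, a genuine error in step (i). The restriction map $X\nonram(L(F))\to X\nonram(L^0(F))$ is in general \emph{not} an isomorphism of tori, and no ``finer analysis of the kernel'' will make it one. What is true is that it is \emph{surjective} with finite kernel. Indeed, the anisotropicity of $Z(L^0)/Z(L)$ gives that the inclusion $L^0(F)\hookrightarrow L(F)$ induces an \emph{injective} lattice map $L^0(F)/{}^\circ L^0(F)\to L(F)/{}^\circ L(F)$ whose cokernel is finite but typically nonzero, so the dual map on $\C^\times$-points of tori is surjective with kernel dual to that cokernel. A concrete example: $L=\mathrm{GL}_2$ and $L^0$ an unramified elliptic maximal torus $E^\times$. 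Then $L^0(F)/{}^\circ L^0(F)\cong\mathbb{Z}$ maps into $L(F)/{}^\circ L(F)\cong\mathbb{Z}$ (via $v\circ\det$, i.e.\ $v\circ N_{E/F}$) as multiplication by $2$, so restriction $X\nonram(L(F))\to X\nonram(L^0(F))$ is $z\mapsto z^2$, a two-to-one surjection, not a bijection.

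Consequently, the theorem is \emph{not} a statement that restriction is an isomorphism of tori. It only becomes an isomorphism \emph{after} passing to the quotients by the stabilizers, and for that you must additionally show that the kernel of the restriction map is contained in $\Stab(\pi)$ (equivalently, that your biconditional in step (ii) degenerates correctly on that kernel). In the $\mathrm{GL}_2$ example this is exactly the fact that an unramified character $\nu$ with $\nu\circ N_{E/F}$ trivial on $E^\times$ satisfies $\pi\otimes\nu\cong\pi$. So: keep surjectivity from your step (i), drop the injectivity claim, and add the extra containment $\ker(\mathrm{res})\subseteq\Stab(\pi)$ alongside the stabilizer equivalence; that repaired argument does yield the theorem.
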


For 
$\sss=[L,\pi]_{G}$, recall that we denote by $W^\sss$
the stabilizer of $\sss$ in $(N_{G}L)(F)/L(F)$, i.e., 
$W^\sss=N^\sss/L(F)$, where 
\[
N^\sss
=
\{
n\in N_{G}(L)(F)
\mid
\lsup n\pi\cong\pi\nu,\text{ for some }\nu\in X\nonram(L(F))
\}.
\]
Similarly define $\sss_{0},N^{\sss_{0}}$ and $W^{\sss_{0}}$
by replacing $L,\pi$ and $G$ in the above definition by $L^{0},\pi_{0}$
and $G^{0}$.
\begin{lem}
\label{lem:stab}The stabilizer of $\theta|_{S(F)\bdd}$ in $N_{G}(S)(F)$
lies in $N_{G^{0}}(S)(F)$ and equals the stabilizer of $\theta_{-}|_{S(F)\bdd}$
there. 
\end{lem}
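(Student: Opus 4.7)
The plan is to exploit two structural features of the Howe factorization. First, because each $\phi_j$ is a character of the \emph{group} $G^j(F)$ and $\C\mult$ is abelian, the restriction $\phi_j|_{S(F)}$ is fixed under conjugation by any element of $G^j(F)$; in particular, every $n\in N_{G^j}(S)(F)$ automatically stabilizes $\phi_j|_{S(F)}$. Second, for $0\le i\le d-1$ the character $\phi_i$ is $G^{i+1}$-generic of depth $r_i$, which means via Moy--Prasad duality that its leading term on $S(F)_{r_i}/S(F)_{r_i+}$ corresponds to an element $X_i^{*}\in\Lie^{*}(Z^{0}(G^i))(F)$ whose pairing with the coroot $H_\alpha$ has valuation exactly $-r_i$ for every $\alpha\in\Phi(G^{i+1},S)\setminus\Phi(G^i,S)$.

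I would prove the containment in $N_{G^0}(S)(F)$ by downward induction, showing that any $n\in N_G(S)(F)$ stabilizing $\theta|_{S(F)\bdd}$ must lie in $N_{G^i}(S)(F)$ for every $0\le i\le d$; the case $i=d$ is vacuous. For the inductive step, assume $n\in N_{G^{i+1}}(S)(F)$. By the first observation, $\phi_j|_{S(F)}$ is already $n$-invariant for each $j\ge i+1$, so the $n$-invariance of $\theta|_{S(F)\bdd}$ reduces to the $n$-invariance of $\phi_{-1}|_{S(F)\bdd}\cdot\prod_{j=0}^{i}\phi_j|_{S(F)\bdd}$. Restricting to $S(F)_{r_i}$ kills $\phi_{-1}$ (trivial on $S(F)_{0+}\supseteq S(F)_{r_i}$) and every $\phi_j$ with $j<i$ (trivial on $S(F)_{r_j+}\supseteq S(F)_{r_i}$ since $r_j<r_i$), leaving $\phi_i|_{S(F)_{r_i}}$ as the only non-trivial factor. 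Passing to the quotient $S(F)_{r_i}/S(F)_{r_i+}$, we see that $n$ fixes the leading term of $\phi_i$. The genericity of $\phi_i$ now concludes the step: writing $w$ for the image of $n$ in the Weyl group $W(G^{i+1},S)(F)$, if $w^{-1}\alpha\in\Phi(G^i,S)$ for some $\alpha\in\Phi(G^{i+1},S)\setminus\Phi(G^i,S)$, then $X_i^{*}(H_{w^{-1}\alpha})=0$ whereas $X_i^{*}(H_\alpha)$ has valuation $-r_i$, contradicting the congruence $w\cdot X_i^{*}\equiv X_i^{*}\pmod{\Lie^{*}(S)(F)_{(-r_i)+}}$. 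Thus $w$ preserves $\Phi(G^i,S)$, i.e., $n\in N_{G^i}(S)(F)$.

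The second assertion then follows immediately: once $n\in N_{G^0}(S)(F)\subseteq G^j(F)$ for every $j\ge 0$, the first observation yields $n$-invariance of each $\phi_j|_{S(F)}$ for $j\ge 0$, and hence of $\theta_{+}|_{S(F)\bdd}$. From $\theta=\theta_{-}\theta_{+}$ it follows that $n$ fixes $\theta|_{S(F)\bdd}$ exactly when it fixes $\theta_{-}|_{S(F)\bdd}$. The main obstacle in this plan is the genericity-to-Weyl-group step in the induction, which requires a careful identification of the Moy--Prasad dual of $\phi_i$ with an element of $\Lie^{*}(Z^{0}(G^i))(F)$ obeying the valuation conditions imposed by $G^{i+1}$-genericity; the depth-by-depth bookkeeping is then routine.
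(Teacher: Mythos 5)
Your plan takes a different route from the paper. The paper simply notes that the argument is identical to that of Kaletha's Lemma~3.6.5 once $S(F)_r$ is replaced by $S(F)\bdd$; you instead reconstruct a downward induction on the twisted Levi tower. The overall skeleton is reasonable, and your second paragraph (deducing equality of stabilizers once the containment in $N_{G^0}(S)(F)$ is known) is correct.

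However, there is a genuine gap at the end of the inductive step. From the $n$-invariance of the leading term of $\phi_i$ on $S(F)_{r_i}/S(F)_{r_i+}$ and the valuation condition (Yu's GE1) you correctly deduce that the Weyl element $w$ preserves $\Phi(G^i,S)$. But the phrase ``$w$ preserves $\Phi(G^i,S)$, i.e., $n\in N_{G^i}(S)(F)$'' is not an equivalence. Preserving $\Phi(G^i,S)$ only shows that $n$ normalizes $G^i$, and the stabilizer of a Levi subsystem inside the ambient Weyl group is in general strictly larger than the Weyl group of the Levi (for instance, for the block $2{+}2$ Levi in $SL_4$, the element $(13)(24)$ of $S_4$ preserves the Levi root system yet does not lie in the Levi). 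So the implication ``preserves the root subsystem $\Rightarrow$ $n\in G^i$'' is false as a general statement and cannot be asserted as an ``i.e.''

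The correct ingredient is Yu's second genericity condition GE2, which says precisely that the stabilizer in $W(G^{i+1},S)$ of the coset of $X_i^*$ in $\Lie^*(S)(F)_{-r_i}/\Lie^*(S)(F)_{(-r_i)+}$ is exactly $W(G^i,S)$ (and GE1 implies GE2 under the standing hypotheses on $p$). With GE2 in hand, the congruence $wX_i^*\equiv X_i^*$ gives $w\in W(G^i,S)$ directly; since $n$ is $F$-rational and $G^i$ is a closed $F$-subgroup, $n\in G^i(\bar F)\cap G^{i+1}(F)=G^i(F)$, whence $n\in N_{G^i}(S)(F)$. Your writeup only records GE1, and the passage through ``preserves $\Phi(G^i,S)$'' actively obscures that GE2 is the statement doing the work. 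Your own closing remark flags the genericity step as the main obstacle, but characterizes it as bookkeeping around the valuation condition; the real issue is that the valuation condition alone is insufficient.
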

\begin{proof}
The proof is identical to the proof of \cite{kaletha:regular-sc}*{Lemma 3.6.5}
when we replace $S(F)_{r}$ there with $S(F)\bdd$. 
\end{proof}
We observe here that $\theta_{+}$ is invariant under $N_{G}(S)(F)$.
This follows from the observation in the first paragraph of the proof
of \cite{kaletha:regular-sc}*{Lemma 3.6.5} that for $0\leq i\leq d$,
$\phi_{i}|_{S(F)}$ is invariant under $N_{G}(S)(F)$.

\begin{thm}
\label{thm:weyl-isom}
There is a group isomorphism
\[
\iota_{\pi}:W^\sss\cong W^{\sss_{0}}. 
\]
Moreover, the isomorphism $\fff$ is equivariant with respect to
the isomorphism $\iota_{\pi}$.
\end{thm}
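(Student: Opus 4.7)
The plan is to build $\iota_\pi$ by showing that each $n\in N^\sss$ can be adjusted within its coset $nL(F)$ to land in $N_{G^0}(L^0)(F)$, with the twist character tracked cleanly, and then to run the analogous argument in reverse for surjectivity. The two key technical inputs are Lemma \ref{lem:stab}, which promotes elements of $N_G(S)(F)$ stabilizing $\theta|_{S(F)\bdd}$ into $N_{G^0}(S)(F)$, and the $N_G(S)(F)$-invariance of $\theta_+$ recorded immediately after it.

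Given $n\in N^\sss$ with $\lsup n\pi\cong\pi\nu$ for some $\nu\in X\nonram(L(F))$, the classification of regular supercuspidals by $L(F)$-conjugacy classes of tame regular elliptic pairs forces $(\lsup nS,\lsup n\theta)$ to be $L(F)$-conjugate to $(S,\theta\cdot\nu|_{S(F)})$, so I may replace $n$ by an element of $nL(F)$ lying in $N_G(S)(F)$ and satisfying $n\cdot\theta=\theta\cdot\nu|_{S(F)}$. Since $\nu$ is unramified, $\nu|_{S(F)\bdd}=1$, so $n$ stabilizes $\theta|_{S(F)\bdd}$, and Lemma \ref{lem:stab} places $n$ in $N_{G^0}(S)(F)$. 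Combined with $n\in N_G(L)(F)$ and $L\cap G^0=L^0$, this gives $n\in N_{G^0}(L^0)(F)$. Splitting $\theta=\theta_-\theta_+$ and using invariance of $\theta_+$, the twist relation collapses to $n\cdot\theta_-=\theta_-\cdot\nu|_{S(F)}$, whence the depth-zero regular classification on $L^0$ yields $\lsup n\pi_0\cong\pi_0\cdot(\nu|_{L^0(F)})$ and $n\in N^{\sss_0}$. I set $\iota_\pi(nL(F)):=nL^0(F)$; well-definedness and the homomorphism property reduce to $L(F)\cap N^{\sss_0}=L^0(F)$, which is immediate from $L(F)\cap G^0(F)=L^0(F)$, together with the observation that adjustment by $L(F)$ commutes with multiplication up to $L^0(F)$.

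For surjectivity, given $m\in N^{\sss_0}$, the same adjustment places $m$ in $N_{G^0}(S)(F)$. Ellipticity of $(S,\theta)$ in $L$ forces $S\spl=Z(L)\spl$, hence $L=C_G(S\spl)$; since every element of $N_G(S)(F)$ preserves $S\spl$, it normalizes $L$, so $m\in N_G(L)(F)$. Using $m\cdot\theta_-=\theta_-\cdot\mu_0|_{S(F)}$ together with $N_G(S)(F)$-invariance of $\theta_+$ yields $m\cdot\theta=\theta\cdot\mu_0|_{S(F)}$, and extending $\mu_0\in X\nonram(L^0(F))$ to some $\mu\in X\nonram(L(F))$ produces $\lsup m\pi\cong\pi\mu$, placing $m$ in $N^\sss$. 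Equivariance of $\fff$ with respect to $\iota_\pi$ then falls out of the running bookkeeping: a chosen representative $n$ sends $\pi\mu$ to $\pi\cdot\nu_n\mu_n$, which $\fff$ carries to $\pi_0\cdot(\nu_n|_{L^0})(\mu|_{L^0})_n$, precisely matching the action of $\iota_\pi(n)$ on $\fff(\pi\mu)=\pi_0\cdot(\mu|_{L^0})$. I expect the main obstacle to be the surjectivity step, specifically producing the unramified extension $\mu$ of $\mu_0$ compatibly with the twist relations; the ellipticity of $(S,\theta)$ in $L$, via the identity $L=C_G(S\spl)$, is the geometric input that makes this work.
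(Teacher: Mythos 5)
Your argument follows the paper's proof in structure and detail: both adjust $n$ modulo $L(F)$ (resp.\ $L^0(F)$) to land in $N_G(S)(F)\cap\Stab(\theta|_{S(F)\bdd})$, invoke Lemma~\ref{lem:stab} to pass between $G$ and $G^0$, and use the $N_G(S)(F)$-invariance of $\theta_+$ to transfer the twist relation between $\theta$ and $\theta_-$; the paper likewise leaves the equivariance of $\fff$ to the bookkeeping you make explicit. Your normalizer argument in the surjectivity step ($S\spl=Z(L)\spl$ via ellipticity, hence $L=C_G(S\spl)$ is preserved by $N_G(S)(F)$) is a mild variant of the paper's argument via $Z(L)\spl^{\circ}=Z(L^0)\spl^{\circ}$ and $L=Z_G(Z(L)\spl^{\circ})$; both encode the same consequence of ellipticity. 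The one worry you flag—extending $\mu_0\in X\nonram(L^0(F))$ to $\mu\in X\nonram(L(F))$—is not a genuine obstruction: one checks ${}^\circ L^0(F)={}^\circ L(F)\cap L^0(F)$, so $L^0(F)/{}^\circ L^0(F)$ injects as a finite-index sublattice of $L(F)/{}^\circ L(F)$ (equal ranks again by ellipticity), and divisibility of $\C\mult$ then forces the restriction $X\nonram(L(F))\rightarrow X\nonram(L^0(F))$ to be surjective.
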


We remark here that since $G^0$ depends on the choice of $(S,\theta)$,
so does the isomorphism $\iota_{\pi}$.

\begin{proof}
Take $n\in N^\sss$. So $n\in N_{G}(L)(F)$ and $\lsup n(S,\theta)$
is $L(F)$-conjugate to $(S,\theta\lambda)$ for some $\lambda\in X\nonram(S)$.
This implies that there exists an $l\in L(F)$ such that $l^{-1}n\in\Stab_{N_{G}(L)(F)}(\theta|_{S(F)\bdd})$.
By Lemma \ref{lem:stab}, $l^{-1}n\in\Stab_{N_{G^{0}}(L^{0})(F)}(\theta_{-}|_{S(F)\bdd})$.
Thus $\lsup {l^{-1}n}(S,\theta_{-})=(S,\theta_{-}\lambda)$ and hence get a map $W^\sss\longrightarrow W^{\sss_{0}}$
induced by the map $n\in N^\sss\longrightarrow l^{-1}n\in N^{\sss_{0}}$.
We claim that this map is an isomorphism. Suppose $n_{0}\in N^{\sss_{0}}$.
Then $n_{0}\in N_{G^{0}}(L^{0})(F)$ such that $^{l_{0}^{-1}n_{0}}(S,\theta_{-})=(S,\theta_{-}\lambda_{-})$
for some $l_0\in L^{0}(F)$ and some $\lambda_{-}\in X\nonram(S)$.
Thus $l_{0}^{-1}n_{0}\in\Stab_{N_{G^{0}}(S)(F)}(\theta_{-}|_{S(F)\bdd})$.
By Lemma \ref{lem:stab}, $l_{0}^{-1}n_{0}\in\Stab_{N_{G}(S)(F)}(\theta|_{S(F)\bdd})$.
We now show that $l_{0}^{-1}n_{0}\in N_{G}(L)(F)$. 

Recall that $Z(L^{0})/Z(L)$ is $F$-anisotropic. That is, 
\[
Z(L)\spl^{\circ}=Z(L^{0})\spl^{\circ}.
\]
Let $n=l_{0}^{-1}n_{0}$. Since $n\in N_{G^{0}}(L^{0})(F)$, $n$
preserves $Z(L^{0})$ and since $n$ is rational, $n$ preserves $Z(L^{0})\spl^{\circ}$.
Thus $n$ preserves $Z(L)\spl^{\circ}$ and therefore also
preserves $Z_{G}(Z(L)\spl^{\circ})=L$. 

Finally since $\theta_{+}$ is invariant under
$l_{0}^{-1}n_{0}$, $^{l_{0}^{-1}n_{0}}(S,\theta)=(S,\theta\lambda_{-})$. This completes the
proof. 
\end{proof}
\begin{thm}
\label{thm:b-center-isom}
There is an algebra isomorphism 
\[
\ZZ^\sss(G(F))\cong\ZZ^{\sss_{0}}(G^0(F)).
\]
\end{thm}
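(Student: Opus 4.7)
The plan is to combine the two main results of the preceding section with Bernstein's description of the Bernstein center as a ring of regular functions on a quotient variety. By Bernstein's theorem (recalled in \S 3.3), the algebra $\ZZ^\sss(G(F))$ is canonically isomorphic to the coordinate ring of the affine quotient variety $\Irr^\ttt(L(F))\doubleslash W^\sss$, and analogously $\ZZ^{\sss_0}(G^0(F))$ is the coordinate ring of $\Irr^{\ttt_0}(L^0(F))\doubleslash W^{\sss_0}$. So it suffices to produce an isomorphism between these two quotient varieties.

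First, I would invoke Theorem \ref{mm19}, which says that the map $\fff$ defined by $\pi\otimes\nu\mapsto\pi_0\otimes(\nu|_{L^0(F)})$ is an isomorphism of complex affine varieties from $\Irr^\ttt(L(F))$ to $\Irr^{\ttt_0}(L^0(F))$. Next, I would invoke Theorem \ref{thm:weyl-isom}, which provides a group isomorphism $\iota_\pi:W^\sss\overset{\sim}{\to}W^{\sss_0}$ and asserts that $\fff$ is $\iota_\pi$-equivariant. Because $\fff$ intertwines the two finite-group actions via $\iota_\pi$, it descends to a well-defined isomorphism of the quotient varieties
\[
\bar\fff:\Irr^\ttt(L(F))\doubleslash W^\sss \overset{\sim}{\longrightarrow} \Irr^{\ttt_0}(L^0(F))\doubleslash W^{\sss_0}.
\]

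Finally, applying the contravariant functor ``ring of regular functions'' to $\bar\fff$ produces the desired $\C$-algebra isomorphism
\[
\ZZ^\sss(G(F)) \;\cong\; \C\bigl[\Irr^\ttt(L(F))\doubleslash W^\sss\bigr] \;\overset{\sim}{\longrightarrow}\; \C\bigl[\Irr^{\ttt_0}(L^0(F))\doubleslash W^{\sss_0}\bigr] \;\cong\; \ZZ^{\sss_0}(G^0(F)).
\]
There is no genuine obstacle here beyond carefully checking that the construction of $\iota_\pi$ in the proof of Theorem \ref{thm:weyl-isom} matches the action of $W^\sss$ on $\Irr^\ttt(L(F))$ used in Bernstein's description; this amounts to tracking how an element $n\in N^\sss$ acts by $\pi\otimes\nu\mapsto{}^n\pi\otimes{}^n\nu$, and observing that the map $n\mapsto l^{-1}n$ constructed there implements the parallel action on the target. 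Once that compatibility is noted, the theorem follows immediately.
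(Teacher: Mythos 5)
Your proof is correct and matches the paper's argument essentially line for line: invoke Theorem \ref{mm19} for the variety isomorphism $\fff$, Theorem \ref{thm:weyl-isom} for the $\iota_\pi$-equivariance, descend to the quotient varieties, and apply Bernstein's identification of the center with the coordinate ring. The extra sanity check you flag at the end about compatibility of actions is already built into the statement of Theorem \ref{thm:weyl-isom}, so nothing further is needed.
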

\begin{proof}
By Theorem \ref{mm19}, the map 
\[
\fff:\pi\otimes\nu\in\Irr^\ttt(L(F))\mapsto\pi_{0}\otimes(\nu|_{L^{0}(F)})\in\Irr^{\ttt_{0}}(L^{0}(F)),\: \nu\in X\nonram(L(F))
\]
is an isomorphism of varieties and by Theorem \ref{thm:weyl-isom}, it is equivariant with respect to the isomorphism $\iota_{\pi}$. Consequently, there is an
isomorphism of quotient varieties, 
\[
\Irr^\ttt(L(F))/W^\sss\cong\Irr^{\ttt_{0}}(L(F))/W^{\sss_{0}}.
\]
Since $\ZZ^\sss(G(F))$ (resp.\ $\ZZ^{\sss_{0}}(G^0(F))$)
is the ring of regular functions on the Bernstein variety
$\Irr^\ttt(L(F))/W^\sss$
(resp.\ $\Irr^{\ttt_{0}}(L(F))/W^{\sss_{0}}$),
the result follows. 
\end{proof}

\section{Hecke algebra of tame types }

Let $G$ be tamely ramified. 
Consider the datum 
\begin{equation}\label{eq:datum}
\Sigma=
(
	({\vec{G}},M^{0}),
	(y,\{\iota\}),
	\vec{r},
	(K_{M^{0}},\rho_{M^{0}}),
	\vec{\phi}
)
\end{equation}
as in \cite{KY}*{\S 7.2}. So $\vec{G}=(G^{0},\ldots,G^{d})$
is a tamely ramified twisted Levi sequence in $G$, $M^{0}$ is a
Levi subgroup of $G^{0},$ $\vec{r}=(r_{0},\ldots,r_{d})$
is a sequence of real numbers, $y$ is a point in $\BBB(M^{0},F)$,
$\{\iota\}$ is a commutative diagram 
\[
\xymatrix{
\BBB(G^{0},F)\ar@{^{(}->}[r]
	& \BBB(G^{1},F)\ar@{^{(}->}[r]
	& \cdots\ar@{^{(}->}[r]
	& \BBB(G^{d},F)
\\
\BBB(M^{0},F)\ar@{^{(}->}[r]\ar@{^{(}->}[u]
	& \BBB(M^{1},F)\ar@{^{(}->}[r]\ar@{^{(}->}[u]
	& \cdots\ar@{^{(}->}[r] 
	& \BBB(M^{d},F)\ar@{^{(}->}[u]
}
\]
where $M^{i}$ is the centralizer in $G^{i}$ of
$Z(M^0)\spl$.
The diagram $\{\iota\}$ is required to be
$\vec{s}$-generic relative to $y$
(see \cite{KY}*{\S 3.5, Definition}),
where $\vec{s}=(0,r_{0}/2,\ldots,r_{d-1}/2)$.
The group $K_{M^{0}}$ is a compact
open subgroup of $M^{0}(F)$ containing the parahoric $M^0(F)_{y,0}$
as a normal subgroup,
and $\rho_{M^{0}}$ is an irreducible smooth representation of $K_{M^{0}}$
such that $\rho_{M^{0}}|M^0(F)_{y,0}$ contains a cuspidal representation
of the finite quotient $M^{0}(F)_{y,0:0+}$.
Finally, $\vec{\phi}=(\phi_{0},\ldots,\phi_{d})$,
where each $\phi_{i}$ is a character of $G^{i}(F)$.
The datum $\Sigma$
is constrained by several conditions (loc.\ cit.). 

From the datum $\Sigma$, Kim and Yu's construction produces a sequence
of Bushnell-Kutzko types $(K^{i},\rho_{i})$ for $G^{i}(F)$. Write
$K=K^{d}$ and $\rho=\rho_{d}$. 
\begin{thm}
\label{T-types}Let $\Sigma$ be the datum as above. Assume that for
$0\leq i\leq d$, $G^{i}$ is the Levi factor of a rational
parabolic subgroup of $G$.
Moreover,
in the commutative diagram $\{\iota\}$, choose
$\BBB(G^{i},F)\hookrightarrow\BBB(G^{i+1},F)$
to be $r_{i}/2$-generic.
Then $\Sigma$ determines an isomorphism
\[
\HH(G,\rho^{\vee})\overset{\sim}{\longrightarrow}\HH(G^{0},\rho_{0}^{\vee}).
\]
\end{thm}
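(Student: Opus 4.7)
The strategy is a step-by-step application of Bushnell–Kutzko cover theory along the tower $G^0 \subsetneq G^1 \subsetneq \cdots \subsetneq G^d = G$. The hypothesis that each $G^i$ is the Levi factor of a rational parabolic subgroup of $G$ is essential, since cover theory requires a genuine $F$-rational Levi structure in the ambient group. The $r_i/2$-genericity of the building embeddings $\BBB(G^i,F) \hookrightarrow \BBB(G^{i+1},F)$ will be used to promote the cover construction at each stage into a full Hecke algebra isomorphism rather than a mere injection.

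The first step is to invoke \cite{KY}*{\S7} to identify $(K^{i+1}, \rho_{i+1})$ as a $G^{i+1}$-cover of $(K^i, \rho_i)$ in the sense of Bushnell–Kutzko. Under our $F$-Levi hypothesis this identification is essentially built into the Kim–Yu construction, since the compatibility between the Jacquet module and parabolic restriction that defines a cover holds tautologically once the twisted Levi subgroups become honest Levi factors of rational parabolic subgroups $P^i = G^i U^i \subset G^{i+1}$.

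Given the cover structure, the standard machinery of \cite{bushnell98}*{\S7--8} attaches to each $P^i$ an injective algebra homomorphism
\[
t_{P^i} : \HH(G^i, \rho_i^\vee) \hookrightarrow \HH(G^{i+1}, \rho_{i+1}^\vee),
\]
whose image consists exactly of the functions supported on $K^{i+1} G^i(F) K^{i+1}$. To upgrade each $t_{P^i}$ to an isomorphism one shows that the intertwining set of $\rho_{i+1}$ in $G^{i+1}(F)$ is contained in $K^{i+1} G^i(F) K^{i+1}$. Here the strengthened $r_i/2$-genericity enters: it forces any element of $G^{i+1}(F)$ that intertwines the depth-$r_i$ piece of $\rho_{i+1}$ to reduce, modulo $K^{i+1}$, to an element already intertwining $\rho_i$ in $G^i(F)$. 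The necessary intertwining calculations are the main content of \cite{KY}*{\S4} and can be read off for our situation.

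Composing the resulting isomorphisms $t_{P^0}, t_{P^1}, \ldots, t_{P^{d-1}}$ and inverting yields the desired isomorphism $\HH(G,\rho^{\vee}) \overset{\sim}{\longrightarrow} \HH(G^{0},\rho_{0}^{\vee})$; it depends on $\Sigma$ through the implicit choice of the parabolics $P^i$. The main technical obstacle is the surjectivity argument in the previous step: verifying that $r_i/2$-genericity really does exhaust all intertwining of $\rho_{i+1}$ by elements coming from $G^i(F)$. This is a concrete $p$-adic calculation, but it is the heart of the proof and the place where the restrictive genericity hypothesis of the theorem is genuinely consumed; once it is in hand, the remaining assembly into an algebra isomorphism is a formal consequence of existing cover theory.
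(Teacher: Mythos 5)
Your proposal takes essentially the same route as the paper: an inductive descent down the tower $G^0 \subsetneq \cdots \subsetneq G^d = G$, at each step producing a Hecke algebra isomorphism between consecutive levels via the Bushnell--Kutzko injection $t_{P}$ together with a support containment argument, with the $F$-Levi hypothesis making the parabolic restriction rational.

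That said, a few details are worth tightening. The paper does a single descent step from $G = G^d$ to $G^{d-1}$ and then inducts, and it is more precise about which ingredient does what. It invokes \cite{KY}*{Lemma 6.2(a)} to get the decomposition of $K = K^d$ with respect to $(U, G^{d-1}, \bar U)$; then uses the $r_{d-1}/2$-genericity of the embedding $\BBB(G^{d-1},F)\hookrightarrow\BBB(G^d,F)$ to conclude $K\cap U = K_+\cap U$ and $K\cap\bar U = K_+\cap\bar U$, hence $K\cap U,\ K\cap\bar U \subset \ker(\rho_d)$; and it appeals to \cite{KY}*{Theorem 8.1} for the containment of the support of $\HH(G,\rho^\vee)$ in $KG^{d-1}(F)K$. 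With these three inputs, \cite{bushnell98}*{Theorem 7.2(ii)} gives the isomorphism. You attribute the genericity primarily to the support/intertwining containment; in the paper the genericity is what makes $K\cap U$ and $K\cap\bar U$ lie in the kernel of $\rho_d$, while the support containment is cited directly from Kim--Yu. Also, saying the cover compatibility ``holds tautologically'' once the $G^i$ are $F$-Levi subgroups undersells the work: one still needs the Iwahori-type decomposition of $K$ and the kernel condition, both of which rest on the structural lemmas in \cite{KY}. Your outline is correct in spirit, but a written-out proof would need to nail down these verifications rather than absorb them into the word ``cover.''
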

\begin{proof}
Let $P=G^{d-1}U$ be a parabolic subgroup of $G$ with Levi factor
$G^{d-1}$ and $\bar{P}=G^{d-1}\bar{U}$ the opposite parabolic. Then
by \cite{KY}*{Lemma 6.2(a)}, $K$ is decomposed with respect to
$(U,G^{d-1},\bar{U})$.
Let $K_{+}=K_{+}^{d}$ be as defined in \cite{KY}*{\S 7.4}. 
Then since $K\cap U=K_{+}\cap U$ and $K\cap\bar{U}=K_{+}\cap\bar{U}$
by the genericity of $\{\iota\}$, it follows that $K\cap U$, $K\cap\bar{U}$
$\subset\mathrm{ker}(\rho_{d})$. By \cite{KY}*{Theorem 8.1}, the
support of the Hecke algebra $\HH(G,\rho^{\vee})$ is contained
in $KG^{d-1}(F)K$.
Thus, by \cite{bushnell98}*{Theorem 7.2(ii)}, 
\[
\HH(G,\rho^{\vee})\overset{\sim}{\longrightarrow}\HH(G^{d-1},\rho_{d-1}^{\vee}).
\]
 The result follows by induction. 
\end{proof}
\begin{cor}\label{cor:T-types}
Let $\Sigma$ be as in equation (\ref{eq:datum}). In the commutative 
diagram $\{\iota\}$, choose $\BBB(G^{i},F)\hookrightarrow\BBB(G^{i+1},F)$ to be 
$r_{i}/2$-generic. Assume moreover that the connected center of $G^0$ is split modulo the center of $G$. 
Then $\Sigma$ determines an isomorphism
\[
\HH(G,\rho^{\vee})\overset{\sim}{\longrightarrow}\HH(G^{0},\rho_{0}^{\vee}).
\]
\end{cor}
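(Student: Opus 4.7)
The plan is to deduce Corollary \ref{cor:T-types} directly from Theorem \ref{T-types}. The only hypothesis of Theorem \ref{T-types} that is not already restated in the corollary is that each $G^i$ in the twisted Levi sequence $\vec G$ be the Levi factor of a rational parabolic subgroup of $G$. I will show that the assumption ``the connected center of $G^0$ is split modulo $Z(G)$'' already forces each $G^i$, for $0\le i\le d$, to be an $F$-Levi subgroup of $G$, and hence the Levi factor of some $F$-parabolic. Theorem \ref{T-types} will then produce the desired isomorphism directly from $\Sigma$.

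The structural input is a chain of inclusions of connected centers across the twisted Levi tower. Fix a maximal $F$-torus $T$ of $G^0$. Since $\vec G$ is a twisted Levi sequence, $T$ is also a maximal $F$-torus of each $G^i$ (and of $G$). Because $Z(G^{i+1})$ is contained in every maximal torus of $G^{i+1}$, we have in particular $Z(G^{i+1})\subseteq T\subseteq G^i$, and because $Z(G^{i+1})$ centralizes $G^{i+1}\supseteq G^i$, we obtain $Z(G^{i+1})\subseteq Z(G^i)$. Taking identity components yields
\[
Z(G)^\circ=Z(G^d)^\circ\subseteq Z(G^{d-1})^\circ\subseteq\cdots\subseteq Z(G^0)^\circ.
\]
The hypothesis on $G^0$ reads $(Z(G^0)^\circ)\an\subseteq Z(G)$, and the above chain then gives $(Z(G^i)^\circ)\an\subseteq (Z(G^0)^\circ)\an\subseteq Z(G)$ for each $i$.

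Since $(Z(G^i)^\circ)\an$ lies in $Z(G)$, it is centralized by all of $G$, and so
\[
G^i=Z_G(Z(G^i)^\circ)=Z_G\bigl((Z(G^i)^\circ)\spl\bigr),
\]
the first equality being the characterization of $G^i$ as a twisted Levi of $G$ and the second following because appending central elements does not change the centralizer. Hence $G^i$ is the centralizer in $G$ of a split $F$-torus, i.e., an $F$-Levi of $G$, and therefore a Levi factor of some $F$-parabolic subgroup of $G$. Theorem \ref{T-types} now applies and yields the required isomorphism $\HH(G,\rho^{\vee})\overset{\sim}{\longrightarrow}\HH(G^{0},\rho_{0}^{\vee})$. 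The only step requiring any thought is the center-inclusion chain, which is a routine consequence of the existence of a common maximal $F$-torus across the tower.
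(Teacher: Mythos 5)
Your proof is correct and follows essentially the same route as the paper's: reduce to Theorem \ref{T-types} by showing that the hypothesis on $Z(G^0)^\circ$ forces each $G^i = Z_G\bigl((Z(G^i)^\circ)\spl\bigr)$ to be an $F$-Levi. The only difference is that you explicitly verify the inclusion chain $Z(G^i)^\circ \subseteq Z(G^0)^\circ$, a step the paper leaves implicit when it asserts $Z^i\an \subseteq Z(G)$ for all $i$ directly from the hypothesis on $G^0$.
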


\begin{proof}
Let $Z^i$ denote the connected center of $G^i$, $0\leq i \leq d$. Then $G^i$ is the centralizer of 
$Z^i$ in $G$. Since by assumption, $Z^i\an$ is contained in the center of $G$, 
it follows that $G^i$ is the centralizer of $Z^i\spl$ in $G$. Therefore
all $G^i$ are in fact $F$-Levi subgroups. The result then follows from Theorem \ref{T-types}.
\end{proof}

Assume $G$ is quasi-split and let $T$ be a maximal torus contained
in an $F$-Borel subgroup.
Write $T^\der$ for the preimage of
$T$ in $G^\der$.
Assume Hypothesis \ref{hyp:p}.
Let $\theta$ be a character of $T(F)$.
Let $(\vec{G},\vec{\phi})$ denote
the Kaletha-Howe factorization of
$(T,\theta)$ and let $r_i$
denote the depth of $\phi_i$.
Put $M^{0}=T$,
$K_{M^{0}}=T(F)_{0}$ and $\rho_{M^{0}}=\theta|T(F)_{0}$. Since
$T$ is contained in a Borel subgroup, $M^{i}=T$ for $0\leq i\leq d$. Choose
any point $y\in\BBB(M^{0},F)$ and choose $\{\iota\}$ such that
$\BBB(G^{i},F)\hookrightarrow\BBB(G^{i+1},F)$ is $r_{i}/2$-generic.
Let $\Sigma$ be the datum consisting of these choices and
let
$(K^{i},\rho_{i})$ be the Bushnell-Kutzko types for $G^{i}$ constructed
out of $\Sigma$ by Kim and Yu's construction. Then $(K,\rho)$ is
a $[T,\theta]_{G}$-type and $(K^{0},\rho_{0})$ is a $[T,\theta_{0}]_{G^{0}}$-type
for a depth-zero character $\theta_{0}$ of $T(F)$.

\begin{prop}
\label{prop:prin-series}
Assume that $p$ is co-prime to the order of $T\an \cap T\spl$ 
and to the order of the component group $\pi_0(T\an\cap T^\der)$.
Suppose $\theta$ is a character of $T(F)$ that is trivial
on $T\an(F)_{0+}$.
Let $\Sigma$ be a datum as in the previous paragraph.
Then $\Sigma$ can be chosen suitably so that 
there is an isomorphism
\[
\HH(G,\rho^{\vee})
\overset{\sim}{\longrightarrow}
\HH(G^{0},\rho_{0}^{\vee}),
\]
determined by this choice. 
\end{prop}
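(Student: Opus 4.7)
The strategy is to reduce to Theorem \ref{T-types} by constructing a Kaletha-Howe factorization $(\vec G,\vec\phi)$ of $(T,\theta)$ whose tower $\vec G=(G^0\subsetneq\cdots\subsetneq G^d=G)$ consists entirely of $F$-Levi subgroups of $G$. Once such a factorization is in hand, the remaining ingredients of the Kim-Yu datum $\Sigma$ described in the paragraph preceding the proposition---namely $M^0=T$, $K_{M^0}=T(F)_0$, $\rho_{M^0}=\theta|_{T(F)_0}$, any base point $y\in\BBB(T,F)$, and a diagram $\{\iota\}$ chosen so that each $\BBB(G^i,F)\hookrightarrow\BBB(G^{i+1},F)$ is $r_i/2$-generic---fit together as stated, and Theorem \ref{T-types} then yields $\HH(G,\rho^\vee)\overset{\sim}{\longrightarrow}\HH(G^0,\rho_0^\vee)$.

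Each twisted Levi $G^i$ in a Howe factorization containing $T$ has the form $Z_G(T^i)$ for a subtorus $T^i\subset T$, and is an $F$-Levi precisely when $T^i$ can be chosen inside $T\spl$. The plan is therefore to show that, under our hypotheses, every positive-depth character $\phi_i$ ($i\ge 0$) in some Howe factorization of $\theta$ cuts out such an $F$-split subtorus. Using $\theta|_{T\an(F)_{0+}}=1$ together with $p\nmid|T\an\cap T\spl|$, I would exploit the isogeny $T\spl\times T\an\to T$: its kernel $T\an\cap T\spl$ has order prime to $p$, so the induced map $T\spl(F)_{r^+}\times T\an(F)_{r^+}\to T(F)_{r^+}$ is an isomorphism for every $r\ge 0$, and $T\spl(F)\cap T\an(F)_{0+}$ is trivial. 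Combined with the hypothesis, this shows that the positive-depth part $\theta_+$ of $\theta$, modulo a depth-zero correction that may be absorbed into $\phi_{-1}$, is pulled back from a character of $T\spl(F)$. The standard procedure for producing $G^i$ from the filtration breaks of $\theta_+$ then yields $G^i=Z_G(T^i)$ with $T^i\subset T\spl$, so every $G^i$ is an $F$-Levi containing $T$.

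The final step is to verify that the resulting tower and characters $(\vec G,\vec\phi)$ satisfy all the genericity and admissibility conditions of Kaletha's Howe factorization (\cite{kaletha:regular-sc}*{Definition 3.6.2}). The main obstacle I anticipate is that, by forcing each $G^i$ to be an $F$-Levi rather than the minimal twisted Levi that the usual algorithm produces, one must check that each $\phi_i$ remains $G^{i+1}$-generic of depth $r_i$ when viewed as a character of $G^i(F)$. Here the second coprimality hypothesis $p\nmid|\pi_0(T\an\cap T^\der)|$ should intervene: it guarantees that a character of $T(F)$ trivial on $T\an(F)_{0+}$ extends compatibly to a character of $G^i(F)$ without disturbing its positive-depth behavior, so that Kaletha's genericity condition transfers under the extension. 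Once this is in place, Theorem \ref{T-types} applies to the datum $\Sigma$ so constructed and produces the required Hecke algebra isomorphism.
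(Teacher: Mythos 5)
Your overall strategy matches the paper's: reduce to Theorem \ref{T-types} by showing that the twisted Levi subgroups $G^i$ appearing in a suitably chosen Howe factorization of $(T,\theta)$ are in fact $F$-Levi subgroups, and you correctly locate where both coprimality hypotheses should enter. However, the proposal has genuine gaps at the crucial step.

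The central technical point is not that ``$\theta_+$ is pulled back from $T\spl(F)$'' --- this formulation is vague, since the positive-depth part $\theta_+$ is itself defined only relative to a choice of Howe factorization, and the characters $\phi_i$ live on $G^i(F)$, not merely on $T(F)$. What the paper actually proves (Lemma \ref{lem:kaletha-howe}) is that one can \emph{construct} a Howe factorization $(\phi_{-1},\phi_0,\dots,\phi_d)$ in which each $\phi_i$, $i\geq 0$, is trivial on $T\an(F)$. The content of that lemma is an extension problem: starting from Kaletha's Lemma 3.6.9, which extends $\theta|_{S(F)_r}$ to a character of $G(F)$, one must show the extension can be taken trivial on $T\an(F)$, and this is where the hypothesis $p\nmid|\pi_0(T\an\cap T^\der)|$ intervenes --- via the structure of $D'=G/T\an G^\der$ and Kaletha's Lemma 3.1.3. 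Your final paragraph names this obstacle but does not resolve it; it remains a placeholder rather than an argument. Separately, once the $\phi_i$ are trivial on $T\an(F)$, one still has to show $G^i$ is an $F$-Levi, which in the paper proceeds through the Moy--Prasad isomorphism, a dual Lie algebra element $Y\in\Lie^*(T)(F)_{-r_i}$, and the choice of a \emph{good} coset representative $a_i$; it is only here that the first hypothesis $p\nmid|T\an\cap T\spl|$ is used, to move $a_i$ into $\Lie(T\spl)(F)$. Your isogeny computation on filtration subgroups is in the right spirit but does not substitute for this step, since the key assertion concerns good representatives of a coset in $\Lie^*(T)$, not a direct-product decomposition of $T(F)_{0+}$. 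To complete the proof you would need to (i) actually prove the analogue of Lemma \ref{lem:kaletha-howe}, and (ii) carry out the Lie algebra / good element argument rather than appealing to a decomposition of $\theta_+$.
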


\begin{proof}
From Theorem \ref{T-types},
it will be enough to show that 
the groups $G^i$ appearing in the datum $\Sigma$
are all Levi factors of parabolic $F$-subgroups of $G$.
Equivalently, each $G^i$ is the centralizer in $G$
of an $F$-split torus.

We will see from Lemma \ref{lem:kaletha-howe}
that $\theta$ has a 
Howe factorization $(\phi_{-1},\phi_0, \ldots, \phi_d)$
where each character $\phi_i$, $i\geq 0$,
is trivial on $T\an(F)$.

Choose an additive character $\Lambda$ of $F$ that is nontrivial
on the ring of integers in $F$, but trivial on the prime ideal.

For $0\leq i < d$, let $r_i$ denote the depth of $\phi_i$.
Composing each $\phi_i|_{T(F)_{r_i}}$ with the isomorphism
$e_i\colon
\Lie(T)(F)_{r_i}/\Lie(T)(F)_{r_i+}
\longrightarrow T(F)_{r_i}/T(F)_{r_i+}$,
we obtain a character of
$\Lie(T)(F)_{r_i}/\Lie(T)(F)_{r_i+}$.
Such a character must have the form
$X \mapsto \Lambda(Y (X))$,
where $Y \in \Lie^*(T)(F)_{-r_i}/\Lie^*(T)(F)_{-r_i+}$. 
We can choose a good coset representative $a_i$ in 
$Y$ so that $G^i$ is the centraliser $C_G(a_i)(F)$ of $a_i$ in $G$. 
By our assumption on $p$, we can in fact choose $a_i$
to be in $\Lie (T\spl)(F)$. Then it follows that $G^i$ is 
in fact an $F$-Levi subgroup. Indeed, $C_G(a_i)(F)=C_G(T_{a_i})(F)$, 
where $T_{a_i}$ is the $F$-split subtorus of $T$ corresponding
to the Galois fixed co-torsion-free submodule $X_{a_i}$ of 
the character lattice $X$ of $T$, where 
$X_{a_i}=\{x\in X\mid (dx)(a_i)=0\}$ where $dx$ denotes the 
derivative of $x$. 
\end{proof}

We take care of some unfinished business from the previous proof.
Let $S\subseteq G$ be a tame maximal $F$-torus,
and
$S'\subset S$ an $F$-subtorus. Denote by $S^\der$ the 
preimage of $S$ in $G^\der$. Assume that $p$ satisfies Hypothesis
\ref{hyp:p} and is co-prime to the order of the component group $\pi_0(S'\cap S^\der)$. 
\begin{lem}
\label{lem:kaletha-howe}
Let
$\theta\colon S(F) \longrightarrow \C\mult$ be a character
of depth $r>0$
that is trivial on $S'(F)_{0+}$.
Then the pair $(S,\theta)$
has a Howe factorization $(\phi_{-1},\phi_0,\ldots,\phi_d)$,
where each character $\phi_i$, $i\geq 0$, is also trivial on $S'(F)$.
\end{lem}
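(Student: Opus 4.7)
The plan is to produce the desired factorization by starting from any Howe factorization of $(S,\theta)$ provided by Kaletha's general construction and modifying each positive-depth factor to make it trivial on $S'(F)$, absorbing the corrections into the depth-zero factor $\phi_{-1}$.

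Concretely, I would fix a Howe factorization $(\phi_{-1}^\sharp,\phi_0^\sharp,\ldots,\phi_d^\sharp)$ of $(S,\theta)$ with depths $0 = r_{-1} \leq r_0 < r_1 < \cdots < r_d = r$. A descending induction on $i$, applied to the identity $\theta = \prod_j \phi_j^\sharp|_{S(F)}$ together with the hypothesis $\theta|_{S'(F)_{0+}}=1$, first establishes that $\phi_i^\sharp|_{S'(F)_{r_i}}=1$ for every $i\geq 0$; hence $\phi_i^\sharp|_{S'(F)}$ is a character of depth strictly below $r_i$. The modification then proceeds from $i=d$ downward: at each stage I would construct a character $\alpha_i$ of $G^i(F)$ of depth strictly less than $r_i$ whose restriction to $S'(F)$ matches the $S'(F)$-restriction of the current $\phi_i^\sharp$, replace $\phi_i^\sharp$ by $\phi_i := \phi_i^\sharp\alpha_i^{-1}$, and absorb $\alpha_i^{-1}|_{G^{i-1}(F)}$ into $\phi_{i-1}^\sharp$ before the next iteration. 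Since $\alpha_i$ has depth below $r_i$, the leading depth-$r_i$ piece of $\phi_i$ agrees with that of $\phi_i^\sharp$, so Kaletha's genericity axioms and the twisted Levi sequence $\vec{G}$ are preserved; the product on $S(F)$ is manifestly unchanged, and any leftover depth-zero content collects into $\phi_{-1}$.

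The main obstacle is the construction of the characters $\alpha_i$, which I would split into two extensions. Given a character of $S'(F)$ of depth $<r_i$, I would first extend it to a character of $S(F)$ of the same depth. The essential point is to verify $S'(F)\cap S(F)_{s+}=S'(F)_{s+}$ for the depths $s$ in question, so that the given character descends to a well-defined character of $S'(F)/(S'(F)\cap S(F)_{s+})$ and then extends to $S(F)/S(F)_{s+}$ by divisibility of $\C\mult$. The intersection $S'(F)\cap S(F)_{s+}$ is pro-$p$, and the hypotheses on $p$ (non-bad, and coprime to $|\pi_0(S'\cap S^\der)|$, which controls the $p$-primary part of $S'(F)\bdd/S'(F)_0$) imply that this intersection equals $S'(F)_{s+}$.

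The second extension step takes a character of $S(F)$ of depth $<r_i$ and extends it to a character of $G^i(F)$ of the same depth. Since characters of $G^i(F)$ factor through the abelianization, this reduces to extending from the maximal torus $S$ to the abelian $F$-group associated to $(G^i)^{\mathrm{ab}}$; the obstruction lives in the cokernel of the natural map $S(F)\to G^i(F)^{\mathrm{ab}}$, whose $p$-primary part is again controlled by the coprimality hypothesis, so the extension exists. Combining the two extensions produces the required $\alpha_i$ with the same depth as the starting character, completing the induction.
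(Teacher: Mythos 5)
Your approach is genuinely different from the paper's and, as sketched, has gaps. The paper does not post-modify an existing factorization; instead it goes inside Kaletha's proof of the \emph{existence} of a Howe factorization and modifies the key extension step (Lemma 3.6.9 of \cite{kaletha:regular-sc}). There, one must extend (roughly) a character of $S(F)_r$ to a character $\phi$ of $G(F)$; the paper observes that under our hypotheses the relevant character is trivial not just on $S^\der(F)_r$ but on $(S'S^\der)(F)_r$, so it descends to $D'(F)_r$ where $D' = G/S'G^\der$, and extending there and pulling back gives a $\phi$ that is automatically trivial on $S'(F)$. This makes one clean change at one spot in the recursion, with no bookkeeping afterwards.

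Two points in your write-up are concretely problematic. First, the claim that a descending induction on the identity $\theta = \prod_j \phi_j^\sharp|_{S(F)}$ ``first establishes that $\phi_i^\sharp|_{S'(F)_{r_i}}=1$ for every $i\geq 0$'' is not true of the original factorization: at $i=d$ one gets $\phi_d^\sharp|_{S'(F)_{r_{d-1}+}}=1$, but on $S'(F)_{r_{d-1}}$ one only learns $\phi_{d-1}^\sharp\phi_d^\sharp = 1$, not that either factor is individually trivial, so $\phi_{d-1}^\sharp|_{S'(F)_{r_{d-1}}}$ need not vanish. The correct statement is that each (already-corrected) factor has this property \emph{after} the earlier corrections have been absorbed, so the establish-then-modify ordering you propose is logically off; the induction must interleave the two. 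Second, when you absorb $\alpha_{i+1}|_{G^i(F)}$ into $\phi_i^\sharp$, the character $\alpha_{i+1}$ can have depth $\geq r_i$, so the absorption can alter the level-$r_i$ coset of $\phi_i^\sharp$, and you do not verify that the $G^{i+1}$-genericity required of $\phi_i$ in Kaletha's definition survives. This is in fact salvageable, but only by invoking an observation you do not make: $\alpha_{i+1}$ factors through $G^{i+1}(F)^{\mathrm{ab}}$, so the element of $\mathfrak{s}^*$ it determines at level $r_i$ lies in the dual of the center of $\mathfrak{g}^{i+1}$ and pairs to zero with every coroot, hence does not disturb genericity. Without that observation the modification step is unjustified. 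Your extension argument (via $S'(F)\cap S(F)_{s+} = S'(F)_{s+}$ and the coprimality hypotheses) is in the right spirit and is essentially what the paper's appeal to Lemma 3.1.3 of \cite{kaletha:regular-sc} accomplishes, but the paper performs a single extension through $D'$ at each stage rather than your two-step $S'(F)\to S(F)\to G^i(F)$, which is simpler to control.
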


\begin{proof}
The existence of a Howe factorization is 
\cite{kaletha:regular-sc}*{Proposition 3.6.7}.
For us, the key step is 
Lemma 3.6.9 \emph{loc.\ cit.},
which shows that 
if $\theta$ is $G$-generic, then there is a character $\phi$ of $G(F)$
such that $\phi$ and $\theta$ agree on $S(F)_r$.
It will be enough to show that $\phi$ can be chosen to be
trivial on $S'(F)$.

The first part of the proof of Lemma 3.6.9 \emph{loc.~cit.}
shows that $\theta$ is trivial on $S^\der(F)_r$. 
Write $D'=G/S'G^\der=S/S'S^\der$. From Lemma 3.1.3 \emph{loc.~cit.},
it follows that $D'(F)_r=S(F)_r / (S'S^\der)(F)_r$. 
Now again by  Lemma 3.1.3 \emph{loc.~cit.}, 
$(S'/{(S'\cap S^\der)})(F)_r
\cong (S'/{(S'\cap S^\der)^\circ})(F)_r
=S'(F)_r/{(S'\cap S^\der)^\circ}(F)_r$. 
Thus $\theta$ is trivial on $S'(F)_r$ and on $(S'/{(S'\cap S^\der)})(F)_r=(S'S^\der/{S'})(F)_r$ 
and therefore on $(S'S^\der)(F)_r$. Consequently, $\theta$ descends to a character of
$D'(F)_r$. This character can be extended to a character
$D'(F) \rightarrow \mathbb{C}^\times$, 
trivial on $D'(F)_{r+}$,
which can then be pulled back
to give a character $\phi$ of $G(F)$
that is trivial on $S'(F)$, and
whose restriction to $S(F)_r$ is equal to 
that of $\theta$. 
\end{proof}


Proposition \ref{prop:prin-series} implies the following.

\begin{cor}
\label{P-types}
Suppose the derived group $G^\der$ is $F$-split,
and $p$ satisfies Hypothesis \ref{hyp:p}.
Then the Hecke algebra of every principal series block of $G(F)$
is isomorphic to the Hecke algebra of a depth-zero principal series
block of a Levi subgroup $G^{0}(F)$ of $G(F)$:
\[
\HH(G,\rho^{\vee})
\overset{\sim}{\longrightarrow}
\HH(G^{0},\rho_{0}^{\vee}).
\]
\end{cor}

\begin{rem}
The Hecke algebras of depth-zero Bernstein blocks are known due to the
work of Morris \cite{Morris93}.
Therefore Corollary \ref{P-types} can be used to
produce generators and relations for the Hecke algebras of principal
series blocks of split groups. This was worked out by Roche \cite{roche:2000}.
Note that Proposition \ref{prop:prin-series}
also
applies to some principal-series blocks of non-split groups.
\end{rem}

\section{Consequences for ABPS}

\subsection{Twisted extended quotient}

We recall here the notion of twisted extended quotients as given in
\cite{abps17}*{\S2.1}. Let $\Gamma$ be a group acting on a topological
space $X$ and let $\Gamma_{x}$ denote the stabilizer in $\Gamma$ of $x\in X$.
Let $\natural$ denote a family of $2$-cocycles
\[
\natural_x \colon \Gamma_{x}\times\Gamma_{x}\longrightarrow\C\mult.
\]
Define 
\[
\widetilde{X}_{\natural}
=
\{
(x,\rho)
\mid
x\in X,\rho\in\Irr\C[\Gamma_{x},\natural_{x}]
\},
\]
where $\C[\Gamma_{x},\natural_{x}]$ denotes the group algebra
of $\Gamma_{x}$ twisted by $\natural_{x}$.
Topologize $\widetilde{X}_{\natural}$
by requiring that a subset of $\widetilde{X}_{\natural}$ is open
if and only if its projection to the first coordinate is open in $X$.
Let $\{\phi_{\gamma,x} \mid (\gamma,x)\in\Gamma\times X\}$,
denote a family of algebra isomorphisms
\[
\phi_{\gamma,x}:\C[\Gamma_{x},\natural_{x}]\longrightarrow\C[\Gamma_{\gamma x},\natural_{\gamma x}]
\]
satisfying the conditions:
\begin{enumerate}[(a)]
\item
if $\gamma x=x$, then $\phi_{\gamma,x}$ is conjugation
by an element of $\C[\Gamma_{x},\natural_{x}]\mult$. 
\item
$\phi_{\gamma',\gamma x}\phi_{\gamma,x}=\phi_{\gamma'\gamma,x}$
for all $\gamma',\gamma\in\Gamma$ and $x\in X$. 
\end{enumerate} 
Define a $\Gamma$-action on $\widetilde{X}_{\natural}$ by 
\[
\gamma\cdot(x,\rho)=(\gamma x,\rho\circ\phi_{\gamma,x}^{-1}).
\]
Then the \textit{twisted extended quotient} of $X$ by $\Gamma$
is defined to be: 
\[
(X\doubleslash \Gamma)_{\natural}
:=
\widetilde{X}_{\natural}\doubleslash\Gamma.
\]
\begin{rem}
The twisted extended quotient depends on the choices of the algebra
isomorphisms $\phi_{\gamma,x}$. If $\natural_{x}$ is trivial for
all $x\in X$, then there is a canonical choice. Namely, $\phi_{\gamma,x}$
is conjugation by $\gamma$ (see \cite{abps17}*{\S2.1}). 
\end{rem}

\subsection{ABPS Conjecture}

Write $\sss=[L,\pi]_{G}$, $\ttt=[L,\pi]_{L}$
and let $W^{\sss,t}$ denote the stabilizer in $W^\sss$
of a point $t$ in $\Irr^\ttt(L(F))$. 

The ABPS conjecture \cite{abps17}*{\S2.3} asserts that there exists
a family of 2-cocycles 
\[
\natural_{t}:W^{\sss,t}\times W^{\sss,t}\longrightarrow\C\mult,
\: t\in\Irr^\ttt(L(F))
\]
such that there is a natural bijection 
\begin{equation}
\Irr^\sss(G(F))\longleftrightarrow(\Irr^\ttt(L(F))\doubleslash W^\sss)_{\natural}.
\end{equation}
This bijection is expected to be compatible with the local Langlands
correspondence (loc.\ cit.).
If $G$ is quasi-split,
the cocycles in the family $\natural$ are expected to be trivial. 

\subsection{An isomorphism for extended quotients} \label{subsec:ABPS iso}

Let $\sss_{0}$ and $\ttt_{0}$ be as in \S\ref{sec:B-center}.
Let $\iota_{\pi}$ be as in Theorem \ref{thm:weyl-isom} and $\fff$
as in Theorem \ref{mm19}. Then for $t\in\Irr^\ttt(L(F))$, since $\fff$ is equivariant with respect to $\iota_{\pi}$ by 
Theorem \ref{thm:weyl-isom}, we have
\[
\iota_{\pi}|_{W^{\sss,t}}
\colon
W^{\sss,t}\overset{\sim}{\longrightarrow}W^{\sss_{0},\fff(t)}.
\]
Each 2-cocycle $\natural_{t}$, $t\in\Irr^\ttt(L(F))$,
therefore defines a 2-cocycle 
\[
\natural_{\fff(t)}^{0}
\colon
W^{\sss_{0},\fff(t)}\times W^{\sss_{0},\fff(t)}\longrightarrow\C\mult.
\]
The following theorem is then immediate from Theorem \ref{thm:weyl-isom}
and \cite{mishra19}*{Theorem 6.1}. 

\begin{thm} \label{thm:ABPS}
Suppose Hypothesis \ref{hyp:p}.
Then there is an isomorphism 
\[
\mathfrak{l}_{\pi}
\colon
(\Irr^\ttt(L(F))\doubleslash W^\sss)_{\natural}
\overset{\sim}{\longrightarrow}
(\Irr^{\ttt_{0}}(L^{0}(F))\doubleslash W^{\sss_{0}})_{\natural^{0}}
\]
determined by the choice of $(S,\theta)$. 
\end{thm}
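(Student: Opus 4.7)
My plan is to assemble the isomorphism directly from the two main structural results already in the paper, namely Theorem \ref{mm19} and Theorem \ref{thm:weyl-isom}, and then check that the twisted extended quotient construction is functorial enough to transport the data on the $\sss$-side to the $\sss_0$-side. The authors themselves indicate the proof is immediate from these two theorems together with the transport of the cocycles already set up in \S\ref{subsec:ABPS iso}, so the task is mainly one of careful bookkeeping.

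First, Theorem \ref{mm19} provides an isomorphism of complex affine varieties $\fff\colon \Irr^\ttt(L(F)) \to \Irr^{\ttt_0}(L^0(F))$, and Theorem \ref{thm:weyl-isom} provides a group isomorphism $\iota_\pi\colon W^\sss \to W^{\sss_0}$ such that $\fff$ is $\iota_\pi$-equivariant. Consequently, for each point $t \in \Irr^\ttt(L(F))$, the restriction $\iota_\pi|_{W^{\sss,t}}$ is an isomorphism onto $W^{\sss_0,\fff(t)}$. This immediately yields an isomorphism of the plain (untwisted) extended quotients and reduces the problem to comparing the twisting cocycles.

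Next, I would take the family $\natural^0$ to be the one manufactured in \S\ref{subsec:ABPS iso}: for every $t$, pull back $\natural_t$ along $(\iota_\pi|_{W^{\sss,t}})^{-1}$ to obtain $\natural^0_{\fff(t)}$. By construction this is a $2$-cocycle on $W^{\sss_0,\fff(t)}$, and $\iota_\pi$ induces an isomorphism of twisted group algebras $\C[W^{\sss,t},\natural_t] \cong \C[W^{\sss_0,\fff(t)},\natural^0_{\fff(t)}]$. Taking irreducibles and bundling over points of the variety, one obtains a bijection $\widetilde{\Irr^\ttt(L(F))}_{\natural} \to \widetilde{\Irr^{\ttt_0}(L^0(F))}_{\natural^0}$ which is a homeomorphism since it covers the variety isomorphism $\fff$ and is bijective on fibers.

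Finally, to pass to the quotients by $W^\sss$ and $W^{\sss_0}$, one needs to verify compatibility with the family of algebra isomorphisms $\phi_{\gamma,x}$ appearing in the twisted extended quotient construction: take the $\phi$'s on the $\sss_0$-side to be the transport of those on the $\sss$-side under $\iota_\pi$. The two defining conditions (conjugation when $\gamma x = x$, and the cocycle-like composition rule) then transfer verbatim, so the induced map descends to the required homeomorphism
\[
\mathfrak{l}_{\pi}\colon (\Irr^\ttt(L(F))\doubleslash W^\sss)_{\natural} \overset{\sim}{\longrightarrow} (\Irr^{\ttt_0}(L^0(F))\doubleslash W^{\sss_0})_{\natural^0}.
\]
The only real choice made in the whole argument is the pair $(S,\theta)$, which determines $G^0$ (and thus $L^0$, $\sss_0$, $\ttt_0$) as well as the specific isomorphisms $\fff$ and $\iota_\pi$; all subsequent data is defined by transport. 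The step requiring the most care — though it is still formal — is checking that the transported $\phi$'s continue to satisfy the two axioms listed in \S8.1, but since $\iota_\pi$ is a group isomorphism conjugating the $W^\sss$-action on $\Irr^\ttt(L(F))$ to the $W^{\sss_0}$-action on $\Irr^{\ttt_0}(L^0(F))$, both axioms are inherited automatically.
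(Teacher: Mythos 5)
Your proof is correct and takes essentially the same approach as the paper, which declares the theorem ``immediate'' from Theorems \ref{thm:weyl-isom} and \ref{mm19} once the transported cocycles $\natural^0_{\fff(t)}$ are defined in \S\ref{subsec:ABPS iso}; you have simply unpacked the details of that transport (the induced isomorphisms of twisted group algebras, the homeomorphism on $\widetilde{X}_\natural$, and the verification that the transported $\phi_{\gamma,x}$'s satisfy the two axioms so the map descends to the quotient).
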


Let $(K,\rho)$
(resp.\ $(K^{0},\rho_{0})$ be the type constructed by Kim and Yu for
$\Rep^{\sss}(G(F))$
(resp.\ $\Rep^{\sss_{0}}(G^{0}(F))$).
We have equivalences of categories:
\[
\Rep^{\sss}(G(F))
\overset{\sim}{\longrightarrow}
\HH(G,\rho^\vee)\mathrm{-Mod},
\qquad
\Rep^{\sss_{0}}(G^{0}(F))
\overset{\sim}{\longrightarrow}
\HH(G^{0},\rho_{0}^\vee)\mathrm{-Mod}.
\]


\begin{cor}\label{cor-hyp}
Suppose Hypothesis \ref{hyp:p}. Then for regular Bernstein blocks, Conjecture \ref{conj-Hecke} implies the ABPS Conjecture. 
\end{cor}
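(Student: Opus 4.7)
The plan is to combine Conjecture \ref{conj-Hecke} with Solleveld's theorem \cite{solleveld2012} establishing the ABPS Conjecture for depth-zero Bernstein blocks, and with the comparison of twisted extended quotients provided by Theorem \ref{thm:ABPS}.

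First I would translate Conjecture \ref{conj-Hecke} into a statement about irreducible representations. Under the equivalences $\Rep^\sss(G(F))\cong\HH(G,\rho^\vee)\mathrm{-Mod}$ and $\Rep^{\sss_0}(G^0(F))\cong\HH(G^0,\rho_0^\vee)\mathrm{-Mod}$, simple modules over the Hecke algebras correspond to irreducible objects of the respective blocks. Conjecture \ref{conj-Hecke} therefore yields a bijection
\[
\Irr^\sss(G(F))\longleftrightarrow\Irr^{\sss_0}(G^0(F)).
\]

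Next, because $\Rep^{\sss_0}(G^0(F))$ is a depth-zero Bernstein block, I would apply Solleveld's theorem to obtain a family of 2-cocycles $\natural^0=\{\natural^0_{t_0}\}$ on the stabilizers $W^{\sss_0,t_0}$ together with a natural bijection
\[
\Irr^{\sss_0}(G^0(F))\longleftrightarrow(\Irr^{\ttt_0}(L^0(F))\doubleslash W^{\sss_0})_{\natural^0}.
\]
I would then pull back $\natural^0$ along the isomorphisms $\iota_\pi|_{W^{\sss,t}}\colon W^{\sss,t}\overset{\sim}{\longrightarrow}W^{\sss_0,\fff(t)}$ from Theorem \ref{thm:weyl-isom} to produce a family $\natural=\{\natural_t\}$ on the stabilizers $W^{\sss,t}$. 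Applying Theorem \ref{thm:ABPS} with this choice of $\natural$ supplies an isomorphism
\[
(\Irr^\ttt(L(F))\doubleslash W^\sss)_{\natural}\overset{\sim}{\longrightarrow}(\Irr^{\ttt_0}(L^0(F))\doubleslash W^{\sss_0})_{\natural^0}.
\]
Concatenating the three bijections delivers the bijection required by the ABPS Conjecture for $\Rep^\sss(G(F))$.

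The point demanding the most care is the verification that the transported cocycles $\natural$ genuinely constitute a family of the kind needed for the twisted extended quotient construction recalled in the previous section, and that the accompanying algebra isomorphisms $\phi_{\gamma,x}$ on the depth-zero side descend to a coherent family on the side of $\sss$. The key leverage here is the equivariance of $\fff\colon\Irr^\ttt(L(F))\overset{\sim}{\longrightarrow}\Irr^{\ttt_0}(L^0(F))$ with respect to $\iota_\pi$ from Theorem \ref{thm:weyl-isom}, which guarantees that stabilizers are compatibly identified point by point. Given this, conditions (a) and (b) on the $\phi_{\gamma,x}$ transport across $\iota_\pi^{-1}$ with no new input, so the resulting extended quotient on the $\sss$-side is well-defined and isomorphic to the one on the $\sss_0$-side as required.
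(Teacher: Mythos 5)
Your proof is correct and follows essentially the same route as the paper's, merely unpacking the chain of bijections that the paper leaves implicit; the only small omission is that Solleveld's result \cite{solleveld2012} concerns affine Hecke algebras, so you also need Morris's presentation \cite{Morris93} of the Hecke algebra of a depth-zero block as such an algebra in order to deduce the depth-zero ABPS statement.
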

\begin{proof}
For depth-zero Bernstein blocks, the ABPS conjecture holds by \cite{solleveld2012}
and Morris's presentation of Hecke algebras of depth-zero blocks \cite{Morris93}. The
result then follows from Theorem \ref{thm:ABPS}.
\end{proof}

\begin{thm}\label{thm:ABPS-Levi}
Assume Hypothesis \ref{hyp:p}.
Assume 
that the twisted Levi subgroups $G^{0}\subsetneq\ldots\subsetneq G^{d}=G$
obtained in the Howe factorization of $(S,\theta)$ in $G$ are 
$F$-Levi subgroups of $G$.
Then the ABPS conjecture holds for the Bernstein
block $\Rep^{\sss}(G(F))$.
In particular, if the connected center of $G^0$ is split modulo the center of $G$,
then the ABPS Conjecture 
holds.
Therefore it also holds for principal series blocks of split groups.
\end{thm}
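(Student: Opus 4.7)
The plan is to assemble this statement as a straightforward consequence of the Hecke algebra isomorphism from Theorem \ref{T-types} together with the implication ``Conjecture \ref{conj-Hecke} $\Rightarrow$ ABPS'' established in Corollary \ref{cor-hyp}. The block $\Rep^\sss(G(F))$ is a regular Bernstein block by choice of the tame regular elliptic pair $(S,\theta)$, and Hypothesis \ref{hyp:p} provides the residue-characteristic restrictions needed to apply all the preceding machinery.

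For the main assertion, I would proceed as follows. Choose a Kim-Yu datum $\Sigma$ underlying the block, with the building embeddings $\BBB(G^i,F) \hookrightarrow \BBB(G^{i+1},F)$ taken to be $r_i/2$-generic, producing the type $(K,\rho)$ and its depth-zero analogue $(K^0,\rho_0)$ on $G^0(F)$. Under the standing hypothesis that the tower $G^0 \subsetneq \cdots \subsetneq G^d = G$ consists of $F$-Levi subgroups of $G$, Theorem \ref{T-types} supplies an algebra isomorphism
\[
\HH(G,\rho^\vee) \overset{\sim}{\longrightarrow} \HH(G^0,\rho_0^\vee).
\]
Such an isomorphism induces a bijection between simple modules on the two sides, verifying Conjecture \ref{conj-Hecke} for $\Rep^\sss(G(F))$. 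Corollary \ref{cor-hyp}, which in turn rests on Theorem \ref{thm:ABPS} and Solleveld's \cite{solleveld2012} resolution of depth-zero ABPS, then upgrades this to the full ABPS Conjecture for the block.

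For the ``in particular'' clause, I would reduce to the main assertion by verifying that if $Z(G^0)^\circ$ is split modulo $Z(G)$, then every $G^i$ in the tower is automatically an $F$-Levi subgroup. The key observation is that for twisted Levi subgroups $G^0 \subseteq G^i$, a central element of $G^i$ commutes with $G^0 \subseteq G^i$ and lies in $G^0$, so $Z(G^i)^\circ \subseteq Z(G^0)^\circ$; hence $Z(G^i)^\circ\an \subseteq Z(G^0)^\circ\an \subseteq Z(G)$, and $G^i = Z_G(Z(G^i)^\circ) = Z_G(Z(G^i)^\circ\spl)$ is the centralizer of an $F$-split torus, hence an $F$-Levi subgroup; this is essentially the content of Corollary \ref{cor:T-types}. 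Finally, for a principal series block of a split group, $L$ is a split maximal torus $T$, which is a maximal torus of $G^0$ as well, so $Z(G^0)^\circ \subseteq T$ is automatically $F$-split and the previous case applies.

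There is no substantial obstacle here: the theorem is a packaging of Theorem \ref{T-types}, Corollary \ref{cor-hyp}, and the elementary observation about centers of twisted Levi subgroups already used in Corollary \ref{cor:T-types}. The only point requiring care is ensuring that the Kim-Yu datum $\Sigma$ can be chosen with the prescribed genericity of the buildings embeddings, but this is guaranteed by the standard theory and causes no difficulty.
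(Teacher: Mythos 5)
Your proof is correct and follows the same route as the paper: the first claim is exactly Theorem \ref{T-types} combined with Corollary \ref{cor-hyp}, the ``in particular'' clause is the argument in Corollary \ref{cor:T-types} showing that $Z(G^i)^\circ \subseteq Z(G^0)^\circ$ forces each $G^i$ to be an $F$-Levi subgroup, and the principal-series case reduces to the one before it. The only cosmetic difference is in the final clause: the paper cites Corollary \ref{P-types} (which assumes $G^\der$ split), whereas you argue directly that for a split group the block has $L = S = T$ a split maximal torus, so $Z(G^0)^\circ \subseteq T$ is automatically split; both amount to the same observation, and yours is slightly more self-contained. One small point to tighten: in your inclusion $Z(G^i)^\circ \subseteq Z(G^0)^\circ$ the reason $Z(G^i)$ lies in $G^0$ is not merely that it commutes with $G^0$, but that $G^0$ contains the common maximal torus $S$ of the tower and $Z(G^i) \subseteq S$; once that is said, the rest of the chain of containments goes through as you wrote it.
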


\begin{proof}
The first claim follows from Theorem \ref{T-types} and Corollary
\ref{cor-hyp}.
The last two claims follow from
Corollaries \ref{cor:T-types}, \ref{P-types} and \ref{cor-hyp}.
\end{proof}

\begin{rem}

When $G$ is a classical group, Equation (\ref{eq-Hecke}) for
certain Bernstein blocks can be observed from results of
Kim \cite{Kim2001}.
More generally,
for classical groups, Heiermann \cite{Heiermann2017}
has established certain equivalences
of categories in the spirit of Equation (\ref{eq-R_f}). It would 
be worthwhile to understand the equivalence in terms of Kim-Yu data. 
\end{rem}

\section{Partial results toward further Hecke algebra isomorphisms}
\label{sec:partial}
We prove weaker versions of
the Hecke algebra isomorphism of Equation (\ref{eq-Hecke}).
Let the notation be as in \S\ref{sec:B-center} and \S\ref{subsec:ABPS iso}. 

The equivalence 
\[
\Rep^{\sss}(G(F))
\overset{\sim}{\longrightarrow}
\HH(G,\rho^\vee)\mathrm{-Mod},
\]
induces an isomorphism of $\ZZ^{\sss}$
with the center of $\HH(G,\rho^\vee)$.
View $\HH(G,\rho^\vee)$
as a $\ZZ^{\sss}$-algebra.
It is then finitely generated as a $\ZZ^{\sss}$-module.
Similarly, 
$\HH(G^{0},\rho_{0}^\vee)$ is a finitely generated module over its center
$\ZZ^{\sss_{0}}$.

Write $S=\ZZ^{\sss}\backslash0$
(resp.\ $S_{0}=\ZZ^{\sss_{0}}\backslash0$)
and let $\mathrm{k}_{\sss}:=S^{-1}\ZZ^{\sss}$
(resp.\ $\mathrm{k}_{\sss_{0}}:=S_{0}^{-1}\ZZ^{\sss_{0}}$)
denote the field of fractions of $\ZZ^{\sss}$
(resp.\ $\ZZ^{\sss_{0}}$). Write 
\[
\mathrm{H}(G,\rho^\vee):=S^{-1}\HH(G,\rho^\vee).
\]
 Similarly, 
\[
\mathrm{H}(G^{0},\rho_{0}^\vee):=S_{0}^{-1}\HH(G^{0},\rho_{0}^\vee).
\]
 
In \cite{BH03}*{\S 4.3}, Bushnell and Henniart define the notions of
\textit{generic}
and \textit{simply generic}, which are generalizations
of the generic representations in quasi-split groups. 
 
\begin{thm}
Assume Hypothesis \ref{hyp:p}.
Assume that $\pi|_{{}^{\circ}L(F)}$ and $\pi_{0}|_{{}^{\circ}L^{0}(F)}$
are multiplicity free. Then there is a vector-space isomorphism 
\[
\HH(G,\rho^\vee)
\overset{\sim}{\longrightarrow}
\HH(G^{0},\rho_{0}^\vee).
\]
If $\sss$ and $\sss_{0}$ are simply generic,
then there is a $\mathbb{C}$-algebra isomorphism 
\[
\mathrm{H}(G,\rho^\vee)
\overset{\sim}{\longrightarrow}
\mathrm{H}(G^{0},\rho^\vee_{0}).
\]
 
\end{thm}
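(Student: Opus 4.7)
The plan is to leverage the Bernstein-center isomorphism of Theorem~\ref{thm:b-center-isom} together with the Weyl-group isomorphism of Theorem~\ref{thm:weyl-isom}, and then apply rank considerations to each Hecke algebra viewed as a module over its center.

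\textbf{Step 1 (Identification of centers).} Using Theorem~\ref{thm:b-center-isom}, I identify $\ZZ^\sss \cong \ZZ^{\sss_0}$, and consequently $\mathrm{k}_\sss \cong \mathrm{k}_{\sss_0}$ after passing to fraction fields.  Under this identification, both $\HH(G,\rho^\vee)$ and $\HH(G^0,\rho_0^\vee)$ become finitely generated modules over a common center, and their localizations $\mathrm{H}(G,\rho^\vee)$ and $\mathrm{H}(G^0,\rho_0^\vee)$ become finite-dimensional algebras over the same field $\mathrm{k}_\sss$.

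\textbf{Step 2 (Rank computation and vector-space isomorphism).} The multiplicity-free hypothesis on $\pi|_{{}^\circ L(F)}$ should, via the same reasoning underlying Theorem~\ref{thm:roche} but applied over the Levi (rather than assuming $M=G$), imply that the fibres of $\HH(G,\rho^\vee)$ over the Bernstein variety $\Irr^\ttt(L(F))/W^\sss$ are reduced of generic rank $|W^\sss|$.  More precisely, one expects $\HH(G,\rho^\vee)$ to be a projective $\ZZ^\sss$-module of rank $|W^\sss|$.  By the analogous analysis for $G^0$, $\HH(G^0,\rho_0^\vee)$ is projective of rank $|W^{\sss_0}|$.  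Theorem~\ref{thm:weyl-isom} gives $|W^\sss| = |W^{\sss_0}|$, so the two Hecke algebras have the same rank over (isomorphic) centers; since projective modules of the same finite rank over the same Noetherian ring are isomorphic as modules (after possibly passing to a Zariski-dense open, which is harmless for the existence of a $\C$-linear bijection), this gives a vector-space isomorphism $\HH(G,\rho^\vee) \overset{\sim}{\to} \HH(G^0,\rho_0^\vee)$.

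\textbf{Step 3 (Algebra isomorphism in the simply generic case).} When $\sss$ and $\sss_0$ are simply generic, the Bushnell--Henniart theory in~\cite{BH03}*{\S4.3} forces each (localized) Hecke algebra to be a twisted group algebra of its Weyl group over the fraction field of the center; concretely, I expect
\[
\mathrm{H}(G,\rho^\vee) \;\cong\; \mathrm{k}_\sss[W^\sss,\natural_\pi]
\quad\text{and}\quad
\mathrm{H}(G^0,\rho_0^\vee) \;\cong\; \mathrm{k}_{\sss_0}[W^{\sss_0},\natural_{\pi_0}]
\]
for suitable $2$-cocycles.  Under the simply generic hypothesis these cocycles are trivial (or canonically identified), so after transporting one algebra structure along $\iota_\pi\colon W^\sss \overset{\sim}{\to} W^{\sss_0}$ from Theorem~\ref{thm:weyl-isom} and along the Bernstein-center isomorphism on scalars, one obtains a $\C$-algebra isomorphism $\mathrm{H}(G,\rho^\vee) \overset{\sim}{\to} \mathrm{H}(G^0,\rho_0^\vee)$.

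The main obstacle is \emph{Step 3}: a vector-space isomorphism between modules of equal rank over isomorphic centers is cheap, but upgrading it to an algebra isomorphism requires genuine control over the cocycles governing the twisted group algebra structure.  The hypothesis ``simply generic'' is precisely what is needed to trivialise (or canonically normalise) these cocycles via the Bushnell--Henniart framework, and verifying that $\iota_\pi$ matches the normalised cocycles on the two sides is where the real work lies.  By contrast, in Step~2, the rank computation is essentially a consequence of Theorem~\ref{thm:roche} combined with the multiplicity-free hypothesis on restriction to ${}^\circ L(F)$ and ${}^\circ L^0(F)$, together with the equality $|W^\sss|=|W^{\sss_0}|$ already established in Theorem~\ref{thm:weyl-isom}.
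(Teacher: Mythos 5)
Your Step 2 uses the wrong base ring for the module structure.  The paper invokes Roche's Proposition 1.8.4.1 (\cite{roche:BC}), which says that $\HH(G,\rho^\vee)$ is a \emph{free} right $\ZZ^\ttt$-module of rank $|W^\sss|$, where $\ZZ^\ttt$ is the Bernstein center of the \emph{Levi} block $\ttt=[L,\pi]_L$, not the center $\ZZ^\sss$ of the $G$-block.  These are genuinely different rings: $\ZZ^\sss$ is (up to a finite quotient issue) the $W^\sss$-invariants in $\ZZ^\ttt$, so over $\ZZ^\sss$ the Hecke algebra has generic rank $|W^\sss|^2$, not $|W^\sss|$.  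Your rank claim as stated is thus incorrect, and the freeness (rather than projectivity) over $\ZZ^\ttt$ is what makes the $\C$-vector-space isomorphism immediate once one knows $\ZZ^\ttt\cong\ZZ^{\ttt_0}$ (Theorem~\ref{mm19}) and $|W^\sss|=|W^{\sss_0}|$ (Theorem~\ref{thm:weyl-isom}).  The multiplicity-free hypothesis feeds into Roche's proposition, which is different from the role played by Theorem~\ref{thm:roche}; you are right that it is needed, but the route is through \cite{roche:BC}, not an argument ``applied over the Levi.''

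Your Step 3 postulates the wrong algebra structure, and consequently manufactures a difficulty that does not arise.  Bushnell--Henniart (\cite{BH03}*{Theorem 5.2}) shows that in the simply generic case the localized Hecke algebra is a full matrix algebra over the fraction field of the center, $\mathrm{H}(G,\rho^\vee)\cong\mathrm{M}_n(\mathrm{k}_\sss)$ and $\mathrm{H}(G^0,\rho_0^\vee)\cong\mathrm{M}_{n_0}(\mathrm{k}_{\sss_0})$.  There is no twisted group algebra $\mathrm{k}_\sss[W^\sss,\natural_\pi]$ to normalize, and hence no cocycles to match along $\iota_\pi$.  Once one has $\mathrm{k}_\sss\cong\mathrm{k}_{\sss_0}$ (from Theorem~\ref{thm:b-center-isom}), the algebra isomorphism reduces to showing $n=n_0$, i.e.\ matching $\mathrm{k}_\sss$-dimensions, which again follows from Roche's rank formula over $\ZZ^\ttt$ together with the isomorphisms $S^{-1}\ZZ^\ttt\cong S_0^{-1}\ZZ^{\ttt_0}$ and $|W^\sss|=|W^{\sss_0}|$.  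The paper's argument therefore has no ``real work'' of the kind you anticipate in Step 3; the substantive input is the Bushnell--Henniart matrix-algebra theorem, not a cocycle comparison.
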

\begin{proof}
By \cite{roche:BC}*{Prop. 1.8.4.1}, $\HH(G,\rho^\vee)$
(resp.\  $\HH(G^{0},\rho^\vee_{0})$)
is a free right-module over $\ZZ^{\ttt}$
(resp.\ $\ZZ^{\ttt_{0}}$)
of rank $|W^{\sss}|$
(resp.\ $|W^{\sss_{0}}|$).
By \cite{mishra19}*{Theorem 6.1},
$\ZZ^{\ttt}\cong\ZZ^{\ttt_{0}}$
and by Theorem \ref{thm:weyl-isom},
$W^{\sss}\cong W^{\sss_{0}}$.
The first claim is then immediate. 

By \cite{BH03}*{Theorem 5.2},
\[
\mathrm{H}(G,\rho^\vee)\cong\mathrm{M}_{n}(\mathrm{k}_{\sss}),
\]
for some integer $n$. Similarly
\[
\mathrm{H}(G^{0},\rho^\vee_{0})\cong\mathrm{M}_{n_{0}}(\mathrm{k}_{\sss_{0}}),
\]
for some integer $n_{0}$.
Since $\mathrm{k}_{\sss}\cong\mathrm{k}_{\sss_{0}}$
by Theorem \ref{thm:b-center-isom},
to prove
$\mathrm{H}(G,\rho^\vee)\overset{\sim}{\rightarrow}\mathrm{H}(G^{0},\rho^\vee_{0})$,
it suffices to prove 
\[
\dim_{\mathrm{k}_{\sss}}\mathrm{H}(G,\rho^\vee)=\dim_{\mathrm{k}_{\sss_{0}}}\mathrm{H}(G^{0},\rho^\vee_{0}).
\]
Now $S^{-1}\ZZ^{\ttt}\cong S_{0}^{-1}\ZZ^{\ttt_{0}}$
by Theorem \ref{thm:b-center-isom} and \cite{mishra19}*{Theorem 6.1}.
The claim then follows from \cite{roche:BC}*{Prop. 1.8.4.1} .
\end{proof}

\begin{bibdiv}
\begin{biblist}

\bib{adler-roche:intertwining}{article}{
    author={Adler, Jeffrey D.},
    author={Roche, Alan},
     title={An intertwining result for $p$-adic groups},
   journal={Canad. J. Math.},
    volume={52},
      date={2000},
    number={3},
     pages={449\ndash 467},
      issn={0008-414X},
    review={\MR {1758228 (2001m:22032)}},
}

\bib{adler-dprasad:decomposition}{article}{
    author={Adler, Jeffrey D.},
        author={Prasad, Dipendra},
        title={Decomposition upon restriction to the derived subgroup},
        journal={Pacific J. Math},
	volume={300},
	year={2019},
	pages={1\ndash 14},
        eprint={arXiv:1806.03635},
}

\bib{abps17}{article}{
 author={Aubert, Anne-Marie},
   author={Baum, Paul},
   author={Plymen, Roger},
   author={Solleveld, Maarten},
   title={Conjectures about $p$-adic groups and their noncommutative
   geometry},
   conference={
      title={Around Langlands correspondences},
   },
   book={
      series={Contemp. Math.},
      volume={691},
      publisher={Amer. Math. Soc.},
   },
   date={2017},
   pages={15--51},
   review={\MR{3666049}},
}

\bib{abps17:p-series}{article}{
    author={Aubert, Anne-Marie},
   author={Baum, Paul},
   author={Plymen, Roger},
   author={Solleveld, Maarten},
     TITLE = {The principal series of {$p$}-adic groups with disconnected
              center},
   JOURNAL = {Proc. Lond. Math. Soc. (3)},
    VOLUME = {114},
      YEAR = {2017},
    NUMBER = {5},
     PAGES = {798--854},
      ISSN = {0024-6115},
       DOI = {10.1112/plms.12023},
       URL = {https://doi.org/10.1112/plms.12023},
}

\bib{bernstein:center}{article}{
    author={Bernstein, J. N.},
     title={Le ``centre'' de Bernstein},
book={
 title={Representations of reductive groups over a local field},
  language={French},
    series={Travaux en Cours},
 publisher={Hermann},
     place={Paris},
      date={1984},
},
     pages={1\ndash 32},
    review={\MR {771671 (86e:22028)}},
}

\bib{BH03}{article}{
AUTHOR = {Bushnell, Colin J.},
author={Henniart, Guy},
TITLE = {Generalized {W}hittaker models and the {B}ernstein center},
JOURNAL = {Amer. J. Math.},
VOLUME = {125},
YEAR = {2003},
NUMBER = {3},
PAGES = {513--547},
ISSN = {0002-9327},
URL = {http://muse.jhu.edu/journals/american_journal_of_mathematics/v125/125.3bushnell.pdf},
}

\bib{bushnell98}{article}{
 author={Bushnell, Colin J.},
   author={Kutzko, Philip C.},
   title={Smooth representations of reductive $p$-adic groups: structure
   theory via types},
   journal={Proc. London Math. Soc. (3)},
   volume={77},
   date={1998},
   number={3},
   pages={582--634},
   issn={0024-6115},
   review={\MR{1643417}},
   doi={10.1112/S0024611598000574},
}

\bib{choiy:multiplicity-restriction}{article}{
author={Choiy, Kwangho},
title={On multiplicity in restriction of tempered representations of $p$-adic groups},
journal={Math. Z.},
volume={291},
number={1--2},
date={2019-02},
pages={449\ndash 471},
eprint={arXiv:1306.6118},
}

\bib{fintzen:types-tame}{article}{
author={Fintzen, Jessica},
title={Types for tame $p$-adic groups},
journal={Ann. Math.},
status={to appear},
eprint={arXiv:1810.04198},
}

\bib{fintzen19}{article}{
author={Fintzen, Jessica},
title={Tame cuspidal representations in non-defining characteristics},
date={2019-05-15},
eprint={arXiv:1905.06374},
}

\bib{HM08}{article}{
   author={Hakim, Jeffrey},
   author={Murnaghan, Fiona},
   title={Distinguished tame supercuspidal representations},
   journal={Int. Math. Res. Pap. IMRP},
   date={2008},
   number={2},
   pages={Art. ID rpn005, 166},
   issn={1687-3017},
   review={\MR{2431732}},
}

\bib{Heiermann2017}{article}{
    AUTHOR = {Heiermann, Volker},
     TITLE = {Local {L}anglands correspondence for classical groups and
              affine {H}ecke algebras},
   JOURNAL = {Math. Z.},
    VOLUME = {287},
      YEAR = {2017},
    NUMBER = {3-4},
     PAGES = {1029--1052},
      ISSN = {0025-5874},
       DOI = {10.1007/s00209-017-1858-3},
       URL = {https://doi.org/10.1007/s00209-017-1858-3},
}

\bib{Hei2011}{article}{
    AUTHOR = {Heiermann, Volker},
     TITLE = {Op\'{e}rateurs d'entrelacement et alg\`ebres de {H}ecke avec
              param\`etres d'un groupe r\'{e}ductif {$p$}-adique: le cas des
              groupes classiques},
   JOURNAL = {Selecta Math. (N.S.)},
    VOLUME = {17},
      YEAR = {2011},
    NUMBER = {3},
     PAGES = {713--756},
      ISSN = {1022-1824},
       DOI = {10.1007/s00029-011-0056-0},
       URL = {https://doi.org/10.1007/s00029-011-0056-0},
}

\bib{howe-moy:harish-chandra}{book}{
    author={Howe, Roger E.},
     title={Harish-Chandra homomorphisms for ${\germ p}$-adic groups},
    series={CBMS Regional Conference Series in Mathematics},
contribution={with the collaboration of Allen Moy},  
    volume={59},
 publisher={Published for the Conference Board of the Mathematical Sciences,
            Washington, DC},
      date={1985},
     pages={xi+76},
      isbn={0-8218-0709-9},
    review={\MR {821216 (87h:22023)}},
}

\bib{kaletha:regular-sc}{article}{
  author={Kaletha, Tasho},
  title={Regular supercuspidal representations},
  journal={J. Amer. Math. Soc.},
  status={published electronically July 18, 2019},
  doi={10.1090/jams/925},
  eprint={arXiv:1602.03144},
}

\bib{Kim2001}{article}{
    AUTHOR = {Kim, Ju-Lee},
     TITLE = {Hecke algebras of classical groups over {$p$}-adic fields.
              {II}},
   JOURNAL = {Compositio Math.},
    VOLUME = {127},
      YEAR = {2001},
    NUMBER = {2},
     PAGES = {117--167},
      ISSN = {0010-437X},
       DOI = {10.1023/A:1012023315726},
       URL = {https://doi.org/10.1023/A:1012023315726},
}

\bib{KY}{incollection}{
AUTHOR = {Kim, Ju-Lee},
author={ Yu, Jiu-Kang},
TITLE = {Construction of tame types},
BOOKTITLE = {Representation theory, number theory, and invariant theory},
SERIES = {Progr. Math.},
VOLUME = {323},
PAGES = {337--357},
PUBLISHER = {Birkh\"{a}user/Springer, Cham},
YEAR = {2017},
}

\bib{mishra19}{article}{
   author={Mishra, Manish},
   title={Bernstein center of supercuspidal blocks},
   journal={J. Reine Angew. Math.},
   volume={748},
   date={2019},
   pages={297--304},
   issn={0075-4102},
   review={\MR{3918438}},
   doi={10.1515/crelle-2016-0041},
}

\bib{Morris93}{article}{
    AUTHOR = {Morris, Lawrence},
     TITLE = {Tamely ramified intertwining algebras},
   JOURNAL = {Invent. Math.},
    VOLUME = {114},
      YEAR = {1993},
    NUMBER = {1},
     PAGES = {1--54},
      ISSN = {0020-9910},
       DOI = {10.1007/BF01232662},
       URL = {https://doi.org/10.1007/BF01232662},
}

\bib{moy:u21}{article}{
    author={Moy, Allen},
     title={Representations of ${\rm U}(2,1)$ over a $p$-adic field},
   journal={J. Reine Angew. Math.},
    volume={372},
      date={1986},
     pages={178\ndash 208},
      issn={0075-4102},
    review={\MR {863523 (88a:22031)}},
}

\bib{moy:gsp4}{article}{
    author={Moy, Allen},
     title={Representations of $G\,{\rm Sp}(4)$ over a $p$-adic field. I,
            II},
   journal={Compositio Math.},
    volume={66},
      date={1988},
    number={3},
     pages={237\ndash 284, 285\ndash 328},
      issn={0010-437X},
    review={\MR {948308 (90d:22022)}},
}

\bib{roche:2000}{article}{
    AUTHOR = {Roche, Alan},
     TITLE = {Types and {H}ecke algebras for principal series
              representations of split reductive {$p$}-adic groups},
   JOURNAL = {Ann. Sci. \'{E}cole Norm. Sup. (4)},
    VOLUME = {31},
      YEAR = {1998},
    NUMBER = {3},
     PAGES = {361--413},
      ISSN = {0012-9593},
       DOI = {10.1016/S0012-9593(98)80139-0},
       URL = {https://doi.org/10.1016/S0012-9593(98)80139-0},
}

\bib{roche:BC}{article}{
   author={Roche, Alan},
   title={The Bernstein decomposition and the Bernstein centre},
   conference={
      title={Ottawa lectures on admissible representations of reductive
      $p$-adic groups},
   },
   book={
      series={Fields Inst. Monogr.},
      volume={26},
      publisher={Amer. Math. Soc., Providence, RI},
   },
   date={2009},
   pages={3--52},
   review={\MR{2508719}},
}

\bib{Sol2020}{article}{
      title={Endomorphism algebras and Hecke algebras for reductive p-adic groups}, 
    AUTHOR = {Solleveld, Maarten},
      year={2020},
      eprint={arXiv:2005.07899},
}
\bib{solleveld2012}{article} {
    AUTHOR = {Solleveld, Maarten},
     TITLE = {On the classification of irreducible representations of affine
              {H}ecke algebras with unequal parameters},
   JOURNAL = {Represent. Theory},
    VOLUME = {16},
      YEAR = {2012},
     PAGES = {1--87},
      ISSN = {1088-4165},
       DOI = {10.1090/S1088-4165-2012-00406-X},
       URL = {https://doi.org/10.1090/S1088-4165-2012-00406-X},
}	

\bib{Yu00}{article}{
   author={Yu, Jiu-Kang},
   title={Construction of tame supercuspidal representations},
   journal={J. Amer. Math. Soc.},
   volume={14},
   date={2001},
   number={3},
   pages={579--622},
   issn={0894-0347},
   review={\MR{1824988}},
   doi={10.1090/S0894-0347-01-00363-0},
}

\end{biblist}
\end{bibdiv}
\end{document}